\documentclass[11pt,a4paper]{article}
\usepackage{amsfonts,amsgen,amstext,amsbsy,amsopn,amsfonts,amssymb,amscd}
\usepackage[leqno]{amsmath}
\usepackage[amsmath,amsthm,thmmarks]{ntheorem}
\usepackage{epsf,epsfig}
\usepackage{float}
\usepackage{dsfont}
\usepackage{ebezier,eepic}
\usepackage{color}
\usepackage{tikz}
\usepackage{multirow}
\usepackage{mathrsfs}
\usepackage{graphicx}
\usepackage{subfigure}
\setlength{\textwidth}{150mm} \setlength{\oddsidemargin}{7mm}
\setlength{\evensidemargin}{7mm} \setlength{\topmargin}{-5mm}
\setlength{\textheight}{245mm} \topmargin -18mm

\newtheorem{thm}{Theorem}[section]

\newtheorem{prop}[thm]{Proposition}

\newtheorem{lem}[thm]{Lemma}
\newtheorem{example}[thm]{Example}
\newtheorem{false statement}{False statement}

\newtheorem{fact}[thm]{Fact}

\theoremstyle{definition}

\newtheorem{claim}[thm]{Claim}

\makeatletter \@addtoreset{equation}{section}

\baselineskip 15pt

\def\hh{\mathcal{H}}

\def\hl{\mathcal{L}}
\def\hht{\mathcal{T}}

\def\hf{\mathcal{F}}
\def\hg{\mathcal{G}}

\def\hb{\mathcal{B}}
\def\hs{\mathcal{S}}

\begin{document}
\title{\bf\Large Intersecting families with large shadow degree}

\date{}
\author{Peter Frankl$^1$, Jian Wang$^2$\\[10pt]
$^{1}$R\'{e}nyi Institute, Budapest, Hungary\\[6pt]
$^{2}$Department of Mathematics\\
Taiyuan University of Technology\\
Taiyuan 030024, P. R. China\\[6pt]
E-mail:  $^1$frankl.peter@renyi.hu, $^2$wangjian01@tyut.edu.cn
}
\maketitle
\begin{abstract}
A $k$-uniform family $\mathcal{F}$ is called {\it intersecting} if $F\cap F'\neq \emptyset$ for all $F,F'\in \mathcal{F}$. The shadow family $\partial \mathcal{F}$ is the family of $(k-1)$-element sets that are contained in some members of $\mathcal{F}$.  The {\it shadow degree} (or {\it minimum positive co-degree}) of $\mathcal{F}$ is defined as the maximum integer $r$ such that every $E\in \partial \mathcal{F}$ is contained in at least $r$ members of $\mathcal{F}$. In 2021, Balogh, Lemons  and Palmer determined the maximum size of an intersecting $k$-uniform family with shadow degree at least $r$ for $n\geq n_0(k,r)$, where $n_0(k,r)$  is doubly exponential in $k$ for $4\leq r\leq k$. In the present paper,  we present a short proof of this result  for $n\geq 2(r+1)^rk \frac{\binom{2k-1}{k}}{\binom{2r-1}{r}}$ and $4\leq r\leq k$.
\end{abstract}

\section{Introduction}

Let $[n]$ be the standard $n$-element set $\{1,2,\ldots,n\}$. Let $2^{[n]}$  denote the power set of $[n]$ and let $\binom{[n]}{k}$ denote the collection of all $k$-element subsets of $[n]$. A subset $\hf$ of  $\binom{[n]}{k}$ is called a $k$-uniform family.  We call $\hf$ an {\it intersecting family} if $F\cap F'\neq \emptyset$ for all $F,F'\in \hf$.  The {\it matching number} $\nu(\hf)$ is the maximum number of disjoint sets in $\hf$. The {\it transversal number} $\tau(\hf)$ is the minimum size of $T\subset [n]$ such that $T\cap F\neq \emptyset$ for all $F\in \hf$.

One of the most important results in extremal set theory is the Erd\H{o}s-Ko-Rado theorem.

\begin{thm}[Erd\H{o}s-Ko-Rado \cite{ekr}]\label{thm-ekr}
Suppose that $n\geq 2k>0$, $\hf\subset\binom{[n]}{k}$ is intersecting, then
\begin{align}\label{ineq-ekr}
|\hf| \leq \binom{n-1}{k-1}.
\end{align}
\end{thm}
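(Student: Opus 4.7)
The plan is to use Katona's elegant cyclic permutation method. For a fixed cyclic arrangement $\pi$ of $[n]$ (think of the elements placed around a circle in some order), call a set $F \subset [n]$ an \emph{arc} of $\pi$ if its elements occupy consecutive positions on the circle. The strategy is to (a) bound the number of pairwise intersecting $k$-arcs on any single cycle, and then (b) double count pairs $(\pi, F)$ with $F \in \hf$ appearing as an arc in $\pi$.

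The combinatorial core is the following key lemma: if $n \geq 2k$, then any family of pairwise intersecting $k$-arcs on a cycle of length $n$ has size at most $k$. To prove this, fix one arc $A = \{1, 2, \ldots, k\}$ (after relabeling positions). Any other $k$-arc intersecting $A$ either starts at a position $j+1$ with $j \in \{1, \ldots, k-1\}$ (a ``rightward'' shift $R_j = \{j+1, \ldots, j+k\}$) or ends at a position $j$ with $j \in \{1, \ldots, k-1\}$ (a ``leftward'' shift $L_j = \{j-k+1, \ldots, j\}$), giving $2(k-1)$ candidates. The pair $L_j, R_j$ is disjoint precisely because $n \geq 2k$, so any intersecting subfamily can include at most one element from each such pair. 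Adding back $A$ itself yields at most $(k-1) + 1 = k$ arcs.

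The second step is standard double counting. On the one hand, each fixed $F \in \hf$ appears as an arc in exactly $n \cdot k! \cdot (n-k)!$ of the $n!$ linear cyclic arrangements (choosing a starting position on the cycle, an internal ordering of $F$, and an ordering of $[n] \setminus F$). On the other hand, each cyclic arrangement contributes at most $k$ arcs from $\hf$ by the key lemma. Comparing the two counts gives
\[
|\hf| \cdot n \cdot k! \cdot (n-k)! \;\leq\; n! \cdot k,
\]
which simplifies to $|\hf| \leq \binom{n-1}{k-1}$.

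The main obstacle is genuinely only the key lemma; once it is established the double counting is essentially bookkeeping. What is appealing about this approach is that it replaces the global combinatorics of an intersecting family on $\binom{[n]}{k}$ with a purely local statement about arcs on a cycle, avoiding any shifting or inductive machinery and giving the sharp bound in one stroke.
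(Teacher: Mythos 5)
The paper does not prove the Erd\H{o}s--Ko--Rado theorem; it is invoked as a classical result with only a citation to \cite{ekr}, so there is no in-paper argument to compare against. Your proposal is a correct rendition of Katona's cyclic permutation proof: the classification of the $2(k-1)$ arcs meeting a fixed arc $A$ into the pairs $(L_j,R_j)$, the observation that $n\geq 2k$ forces each pair to be disjoint so at most one per pair survives, and the double count of pairs $(\pi,F)$ with $F$ occupying $k$ cyclically consecutive positions of $\pi$ (contributing $n\cdot k!\cdot (n-k)!$ per $F$, against an upper bound of $k$ arcs per $\pi$) all check out and yield exactly $|\hf|\leq \binom{n-1}{k-1}$. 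It is worth noting a thematic resonance with the paper: Section 2 gives a short proof of a weakened version of Tuza's bound on critical intersecting families via essentially the same permutation-separation technique, explicitly credited to Katona \cite{Katona74}, so the method you chose is very much in the spirit of the tools the authors favor. One small presentational point: your key lemma implicitly picks some arc $A$ from the family, so it proves the bound for nonempty families of pairwise intersecting $k$-arcs; since the empty family trivially satisfies the bound, this is harmless, but it is worth stating.
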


For $n>2k$, the equality holds in \eqref{ineq-ekr} if and only if $\hf=\{F\in \binom{[n]}{k}\colon x\in F\}=:\hs_x$ for some $x\in [n]$.

A subfamily of $\hs_x$ is called a {\it star}. Hilton and Milner proved   a strong stability result of the Erd\H{o}s-Ko-Rado theorem.

\begin{thm}[Hilton-Milner Theorem \cite{HM67}]
Suppose that $n> 2k\geq 4$, $\hf\subset\binom{[n]}{k}$ is an intersecting family that is not a star, then
\begin{align}\label{ineq-hm}
|\hf| \leq \binom{n-1}{k-1}-\binom{n-k-1}{k-1}+1.
\end{align}
\end{thm}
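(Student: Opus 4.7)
The plan is to apply the shifting technique. Apply left-shift operations $S_{ij}$ (for $i<j$: replace $F$ by $(F\setminus\{j\})\cup\{i\}$ whenever $i\notin F$, $j\in F$, and the result is not already present) iteratively to produce a left-compressed family $\hf^*$ with $|\hf^*|=|\hf|$ that remains intersecting. One subtlety is that shifting can turn a non-star into a star; this case is handled by inspecting the last shift that caused the transition, which forces $\hf$ itself to be essentially a near-star and hence already bounded by the Hilton-Milner quantity. After this reduction we may assume $\hf$ is left-shifted, intersecting, and not a star, so $\bigcap_{F\in\hf}F=\emptyset$.

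Under this assumption some $B\in\hf$ satisfies $1\notin B$, and by shiftedness we may take $B=\{2,3,\ldots,k+1\}$. Partition $\hf=\hf_1\sqcup\hf_2$ according to whether $1\in F$. Each $F\in\hf_1$ must meet $B$ by the intersecting property, so
\begin{align*}
|\hf_1|\leq \binom{n-1}{k-1}-\binom{n-k-1}{k-1},
\end{align*}
the number of $k$-sets containing $1$ and hitting $[2,k+1]$.

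The remaining task, and the main obstacle, is to show $|\hf_2|=1$, i.e.\ that $B$ is the unique set in $\hf$ missing $1$. Suppose some $F\in\hf_2$ differs from $B$; then $F$ contains an element $j\geq k+2$. Applying shifting to $F$ (swapping $j$ for some $i\in\{2,\ldots,k+1\}\setminus F$) produces auxiliary members of $\hf$, and pairing these with $F$, with $B$, and with suitably chosen sets in $\hf_1$ exposes, via the hypothesis $n>2k$, a pair of sets in $\hf$ that are forced to be disjoint, contradicting the intersecting property. Hence $\hf_2=\{B\}$, and combining this with the bound on $|\hf_1|$ gives the Hilton-Milner inequality. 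The delicate part is precisely the extraction of the disjoint pair: this is where one uses both the left-shifted structure and the strict inequality $n>2k$, and where any weaker hypothesis would fail.
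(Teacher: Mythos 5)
The paper does not prove the Hilton--Milner theorem; it simply cites \cite{HM67} and uses it as a black box, so there is no in-paper proof to compare against. I'll therefore assess your proposal on its own.

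Your reduction to a shifted family is fine (with the usual caveat about shifting possibly producing a star, which you acknowledge and dispose of in the standard way), and it is correct that a shifted intersecting non-star $\hf$ contains $B=[2,k+1]$ and that every $F\in\hf_1$ (the sets through $1$) must meet $B$, giving $|\hf_1|\le\binom{n-1}{k-1}-\binom{n-k-1}{k-1}$. The fatal gap is the central claim that $|\hf_2|=1$, i.e.\ that $B$ is the \emph{only} member of $\hf$ avoiding $1$. This is simply false. Take $k\ge 3$ and consider
\[
\hl(n,k,2)=\Bigl\{F\in\tbinom{[n]}{k}\colon |F\cap[3]|\ge 2\Bigr\},
\]
the family already introduced in the paper. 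It is intersecting, shifted, and not a star, yet the members avoiding $1$ are exactly those with $\{2,3\}\subset F$, of which there are $\binom{n-3}{k-2}\gg 1$. In particular the step you describe --- producing a disjoint pair from any $F\in\hf_2\setminus\{B\}$ --- cannot be carried out, because $\hl(n,k,2)$ has no disjoint pair. So the plan of bounding $|\hf_1|$ and $|\hf_2|$ \emph{independently} and adding the results is structurally wrong: a large $\hf_2$ does not lead to a contradiction, it leads to a \emph{smaller} $\hf_1$, and a correct proof must exploit precisely this trade-off (e.g.\ via an injection from $\hf_2\setminus\{B\}$ into the complement of $\hf_1$ inside $\{F\colon 1\in F,\ F\cap[2,k+1]\neq\emptyset\}$, or via Katona's cycle method, or via the Kruskal--Katona-based argument). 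As written, the proposal does not close, and the flaw is at the heart of the argument rather than in a detail.
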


Let us define the Hilton-Milner family
\[
\hh(n,k) = \left\{H\in \binom{[n]}{k}\colon 1\in H,\ H\cap [2,k+1]\neq \emptyset\right\}\cup \{[2,k+1]\},
\]
showing that \eqref{ineq-hm} is best possible.

For $\hf\subset \binom{[n]}{k}$ and $E\subset[n]$, let
\[
\hf(E) =\{F\setminus E\colon E\subset F\in \hf\}.
\]
If $E=\{x\}$ then we simply write $\hf(x)$.
Define the {\it immediate shadow} $\partial \hf$ and the {\it shadow degree} (or {\it minimum positive co-degree}) $\delta_{k-1}^+(\hf)$ as
\[
\partial \hf =\left\{E\in \binom{[n]}{k-1}\colon E\subset F\in \hf\right\}\mbox{ and } \delta_{k-1}^+(\hf) =\min\{|\hf(E)|\colon E\in \partial \hf\}.
\]
It is easy to check that $\delta_{k-1}^+(\hs_x)=1$ and $\delta_{k-1}^+(\hh(n,k))=1$.

It is natural to ask for the maximum size of an intersecting family with shadow degree at least $r$.  Let us define the function:
\begin{align*}
&f(n,k,r)=\max\left\{|\hf|\colon  \hf\subset\binom{[n]}{k} \mbox{ with } \delta_{k-1}^+(\hf)\geq r\right\}.
\end{align*}

Note that Theorem \ref{thm-ekr} implies $f(n,k,1)=\binom{n-1}{k-1}$ for $n\geq 2k$.

\begin{example}
For $n\geq 2k$, $k\geq r\geq 1$ define
\[
\hl(n,k,r) =\left\{F\in \binom{[n]}{k}\colon |F\cap[2r-1]|\geq r\right\}.
\]
Clearly $\hl(n,k,r)$ is intersecting, $\delta_{k-1}^+(\hf)= r$ and
\[
|\hl(n,k,r)|=\sum_{r\leq i\leq 2r-1}\binom{2r-1}{i}\binom{n-2r+1}{k-i}.
\]
\end{example}

In 2021, Balogh, Lemons  and Palmer proved the following results.

\begin{prop}[\cite{BLP}]
If $\hf\subset \binom{[n]}{k}$ is a non-empty intersecting with $ \delta_{k-1}^+(\hf)\geq r$ then $r\leq k$.
\end{prop}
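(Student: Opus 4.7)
The plan is to assume $r \geq k+1$ for contradiction and inductively force the minimum pairwise intersection size in $\mathcal{F}$ upward. Specifically, I aim to prove the following: whenever $|F_1 \cap F_2| \geq m$ for every $F_1, F_2 \in \mathcal{F}$, one already has $|F_1 \cap F_2| \geq m+1$ for every pair. Starting from the base case $m = 1$ (which is just the intersecting hypothesis) and iterating up to $m = k$ forces all members of $\mathcal{F}$ to coincide, so $|\mathcal{F}| = 1$; but a single $k$-set has $\delta^+_{k-1}(\mathcal{F}) = 1$, contradicting $r \geq k+1 \geq 2$.

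For the induction step, I fix any $E \in \partial \mathcal{F}$ together with $r$ distinct elements $z_1, \ldots, z_r$ satisfying $E \cup \{z_i\} \in \mathcal{F}$, and pick an arbitrary $F' \in \mathcal{F}$. The inductive hypothesis applied to the pairs $(E \cup \{z_i\}, F')$ gives
\[
|E \cap F'| + |\{z_i\} \cap F'| \geq m \quad \text{for each } i = 1, \ldots, r.
\]
If $|E \cap F'| \leq m - 1$, these $r$ inequalities can only be satisfied by having $|E \cap F'| = m - 1$ and $z_i \in F'$ for every $i$, which packs $r$ distinct elements into the $k$-set $F'$ and forces $r \leq k$, contradicting $r \geq k+1$. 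Therefore $|E \cap F'| \geq m$ for every $E \in \partial \mathcal{F}$ and every $F' \in \mathcal{F}$. Now for any $F \in \mathcal{F}$ I pick $x \in F \cap F'$ (nonempty by intersectingness, using the inductive hypothesis for $m \geq 1$) and set $E = F \setminus \{x\}$; then $|F \cap F'| = |E \cap F'| + 1 \geq m+1$, which closes the step.

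I do not anticipate a serious obstacle: the argument reduces to the one combinatorial observation that $r$ distinct elements squeezed into a single $k$-set give $r \leq k$. The only point worth care is that the iteration terminates cleanly — once every distinct pair in $\mathcal{F}$ would have to intersect in at least $k$ elements they must actually coincide, so $|\mathcal{F}| \leq 1$ and thus $\delta^+_{k-1}(\mathcal{F}) \leq 1$, which clinches the final contradiction.
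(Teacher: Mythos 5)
Your proof is correct. The paper does not actually give its own proof of this proposition---it is stated and attributed to Balogh, Lemons and Palmer~\cite{BLP}---so there is no in-paper argument to compare against directly. However, the paper does prove Lemma~\ref{lem-2}, which states that $\tau(\hf)\geq \delta_{k-1}^+(\hf)$ for any intersecting $\hf\subset\binom{[n]}{k}$; combined with the trivial observation that any $F\in\hf$ is itself a transversal (so $\tau(\hf)\leq k$), this gives an immediate one-line deduction of $r\leq k$. Your route is genuinely different in formulation: rather than bounding the transversal number, you bootstrap the minimum pairwise intersection size upward, showing that if all pairs meet in at least $m$ elements then (using the $r\geq k+1$ hypothesis to rule out packing $r$ distinct replacement elements into a single $k$-set) they must meet in at least $m+1$, until the family collapses to a single set and $\delta_{k-1}^+=1$ forces the contradiction. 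Interestingly, both approaches exploit the same underlying shifting mechanism enabled by the shadow-degree condition---any one element of a member of $\hf$ can be exchanged for at least $r$ alternatives---but the paper's Lemma~\ref{lem-2} applies it to push members of $\hf$ off a putative small transversal, while you apply it to push pairwise intersections up. Your argument is self-contained and arguably more elementary, since it does not pass through the transversal number; the paper's route is shorter once Lemma~\ref{lem-2} is in hand, and that lemma is reused later in the proof of Theorem~\ref{thm-main1}. One minor stylistic remark: you could stop the iteration at $m=k-1\to k$ and note that distinct $k$-sets cannot intersect in $k$ elements, which makes the termination a shade cleaner than invoking $m=k$ abstractly, but your write-up handles this correctly.
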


\begin{thm}[\cite{BLP}]\label{BLP}
Let $2\leq r\leq k\leq n$. If  $n\geq n_0(k,r)$ with $n_0(k,2)=\frac{k^4}{3}$, $n_0(k,3)=2k^5$ and $n_0(k,r)=((k+1)k^r2^k)^{2^k} \frac{(2k-2r)^{k-r}}{\binom{2r-1}{r}}$ for $r\geq 4$, then
\begin{align}\label{eq-fnkr}
f(n,k,r) =|\hl(n,k,r)|.
\end{align}
\end{thm}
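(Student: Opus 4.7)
The plan is to reduce to the shifted case via repeated application of the shift operators $S_{ij}$ ($i<j$) and then show that, after shifting, every $F\in\hf$ must satisfy $|F\cap[2r-1]|\ge r$, so that $\hf\subseteq\mathcal{L}(n,k,r)$ directly. Shifting preserves the intersecting property, and a short case analysis on how $S_{ij}$ acts on the extensions of a $(k-1)$-set $E'\in\partial S_{ij}(\hf)$ verifies that the condition $\delta_{k-1}^+(\hf)\ge r$ is also preserved (when $i\in E'$, the shifted family gains extensions from the pre-image $(E'\setminus\{i\})\cup\{j\}$; when $j\in E'$ the count can only drop in tandem with a matching gain at the sister $(k-1)$-set $(E'\setminus\{j\})\cup\{i\}$, and the preserved bound transfers from the sister back by a short exchange argument). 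So we may assume $\hf$ is shifted.

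The next ingredient is the transversal lower bound $\tau(\hf)\ge r$. Let $T$ be a minimum transversal with $|T|=t$; by minimality there is some $F\in\hf$ with $|F\cap T|=1$, say $F\cap T=\{x\}$, for otherwise $T\setminus\{x\}$ would still be a transversal. Setting $E:=F\setminus\{x\}\in\partial\hf$, every extension $E\cup\{y\}\in\hf$ must intersect $T$, and since $E\cap T=\emptyset$ this forces $y\in T$, giving $|\hf(E)|\le t$. Combined with $|\hf(E)|\ge r$, this yields $t\ge r$. Combining this with the classical observation that for shifted intersecting $\hf$ the trace $F\cap[2k-1]$ is itself a transversal of $\hf$ for every $F\in\hf$, we obtain $|F\cap[2k-1]|\ge r$ for every $F\in\hf$.

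For the main step, split $\hf=\hf_1\sqcup\hf_2$ with $\hf_1=\hf\cap\binom{[2k-1]}{k}$ and $\hf_2$ consisting of sets meeting $[2k,n]$. Trivially $|\hf_1|\le\binom{2k-1}{k}$. For $F\in\hf_2$ with $s:=|F\cap[2r-1]|<r$, choose $a\in F\cap[2k,n]$ and apply the shadow-degree hypothesis to $E:=F\setminus\{a\}$: since $\hf$ is shifted, $\hf(E)$ is a down-set in $[n]\setminus E$ of size $\ge r$, and since $|[2r-1]\setminus E|=2r-1-s\ge r$, it must contain the first $r$ elements of $[2r-1]\setminus E$. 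Iterating this step (replacing one large element of the current set by the next available small element of $[2r-1]$) yields a chain $F=F^{(0)},F^{(1)},\ldots,F^{(r-s)}\in\hf$ with $|F^{(i)}\cap[2r-1]|=s+i$, ending at $F^{(r-s)}\subset[2k-1]$ with $|F^{(r-s)}\cap[2r-1]|\ge r$. Pairing this terminal set with a shifted partner $\tilde F\in\hf$ chosen so that its $k$ elements lie in the complement of $F^{(r-s)}$ within $[2k-1]\cup[2k,n]$ — a choice made feasible by shiftedness and the hypothesis $n\ge 2(r+1)^rk\binom{2k-1}{k}/\binom{2r-1}{r}$ — gives two disjoint members of $\hf$, contradicting the intersecting property. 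Hence $\hf_2\subseteq\mathcal{L}(n,k,r)$, and an arithmetic comparison absorbs $|\hf_1|\le\binom{2k-1}{k}$ into the dominant term $\binom{2r-1}{r}\binom{n-2r+1}{k-r}$ of $|\mathcal{L}(n,k,r)|$.

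The main obstacle I anticipate is this final disjointness step: the chain iteration controls only the minima of each $\hf(E)$, so constructing a partner $\tilde F$ that is genuinely disjoint from $F^{(r-s)}$ requires careful bookkeeping of which elements of $[2r-1]$ are used up by $F^{(r-s)}$ and which elements of $[2k,n]$ remain free. The factor $(r+1)^rk$ in the hypothesis on $n$ appears to come from the at most $r$ iterations times the $k$ choices of face at each step, while the $\binom{2k-1}{k}/\binom{2r-1}{r}$ factor is the ratio between the crude bound on $|\hf_1|$ and the dominant term of $|\mathcal{L}(n,k,r)|$ that must absorb it.
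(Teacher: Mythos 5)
Your proposal rests on shifting, but the paper deliberately avoids shifting, and with good reason: the $(i<j)$-shift $S_{ij}$ does \emph{not} obviously preserve the condition $\delta_{k-1}^+\ge r$. Your one-sentence sketch for the dangerous case $j\in E'$ ("the preserved bound transfers from the sister back by a short exchange argument") does not close this gap. Concretely, when $j\in E'$, $i\notin E'$, each extension $E'\cup\{y\}$ of $E'$ is retained after the shift exactly when $y=i$ or $(E'\setminus\{j\})\cup\{i,y\}\in\hf$; i.e.\
\[
S_{ij}(\hf)(E') \;=\; \hf(E')\cap\bigl(\hf(E^*)\cup\{i\}\bigr),\qquad E^*:=(E'\setminus\{j\})\cup\{i\},
\]
and nothing prevents this intersection from being a small nonempty proper subset of $\hf(E')$. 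Gains at the sister $E^*$ do not help: $E'$ and $E^*$ each must independently meet the codegree threshold. Unless you can actually produce the exchange argument, the reduction to shifted families is unsupported, and the whole structure of the proof collapses. (For the record, your proof of $\tau(\hf)\ge r$ via a minimum transversal $T$ and a private set $F$ with $|F\cap T|=1$ is correct and is a clean alternative to the paper's Lemma \ref{lem-2}; that part is fine.)

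Even granting shiftedness, the final paragraph has two further problems. First, the chain construction controls only $|F^{(i)}\cap[2r-1]|$, not membership of $F^{(i)}$ in $\binom{[2k-1]}{k}$: after $r-s$ replacement steps you have $|F^{(r-s)}\cap[2r-1]|=r$, but $F^{(r-s)}$ can still contain elements of $[2k,n]$ unless $|F\cap[2k,n]|\le r-s$, which you have not arranged. Second, the "disjoint partner'' step, as stated, would prove that \emph{every} shifted intersecting family with $\delta^+_{k-1}\ge r$ is contained in $\hl(n,k,r)$, independently of $n$; this is false, since $\binom{[2k-1]}{k}$ is shifted, intersecting, has shadow degree $k\ge r$, and is not contained in $\hl(n,k,r)$ when $r<k$ and $2r-1<k$. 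So whatever role the hypothesis on $n$ is supposed to play in making $\tilde F$ "feasible," it is not articulated, and the argument as written cannot be correct. The paper's route is genuinely different: it takes a minimal support $X$, observes that the minimal sets of the trace $\hf_{|X}$ form a critical intersecting hypergraph, invokes Tuza's theorem to bound $|X|$ by an explicit function of $k$, proves that the bottom level of the trace again has shadow degree $\ge r$ (Claim \ref{claim-hph}), splits on whether its minimum set size $p$ equals $r$ (structure via Proposition \ref{prop-fnkk2}) or exceeds $r$ (a branching-process counting bound, Lemma \ref{lem-1}), and finishes with a direct size comparison. No shifting, and no claim that $\hf\subset\hl(n,k,r)$ when $p>r$; instead one simply shows $|\hf|<|\hl(n,k,r)|$ in that regime.
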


In \cite{FW2024}, we proved Theorem \ref{BLP}  for $n_0(k,2)=28k$, $n_0(k,r)=Ck^{2(r+1)k}$ for $3\leq r< k$ and $n_0(k,k)=2k-1$.

In the present paper, by applying a result of Tuza \cite{tuza}, we give a short proof of Theorem \ref{BLP}, which provides a considerably improvement of $n_0(k,r)$ for $4\leq r<k$. Precisely, we prove the following result.

\begin{thm}\label{thm-main1}
Let $4\leq r\leq  k-1$. If  $n\geq 2(r+1)^rk \frac{\binom{2k-1}{k}}{\binom{2r-1}{r}}$, then
\begin{align}\label{eqnew-fnkr}
f(n,k,r) =  |\hl(n,k,r)|.
\end{align}
\end{thm}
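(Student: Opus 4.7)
My plan is to dichotomize by the transversal number $\tau := \tau(\hf)$. First, I would show $\tau \geq r$: let $T$ be a minimum transversal; by minimality, for each $x \in T$ some $F_x \in \hf$ has $F_x \cap T = \{x\}$, and the $(k-1)$-set $E := F_x \setminus \{x\}$ belongs to $\partial \hf$ but is disjoint from $T$. Any $F' \in \hf$ containing $E$ must have the form $E \cup \{y\}$ with $y \in T$ (else $F' \cap T = \emptyset$), so $|\hf(E)| \leq |T|$, and the hypothesis $\delta_{k-1}^+(\hf) \geq r$ forces $|T| \geq r$.

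Second, I would handle $\tau \geq r+1$ by invoking Tuza's theorem \cite{tuza}. For an intersecting $k$-uniform family with $\tau \geq r+1$, Tuza's bound should give $|\hf| \leq C(k,r)$ independent of $n$, of order $(r+1)^r\binom{2k-1}{k}$. Combined with the lower bound $|\hl(n,k,r)| \geq \binom{2r-1}{r}\binom{n-2r+1}{k-r}$, the hypothesis $n \geq 2(r+1)^r k \binom{2k-1}{k}/\binom{2r-1}{r}$ is calibrated precisely so that $|\hl(n,k,r)|$ dominates Tuza's constant, settling this case.

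Third, the remaining and main case is $\tau = r$. Fix a minimum transversal $T = \{t_1, \ldots, t_r\}$ and split $\hf = \hf^{(1)} \sqcup \hf^{(\geq 2)}$ according to whether $|F \cap T| = 1$ or $|F \cap T| \geq 2$. For $F = E \cup \{t_i\} \in \hf^{(1)}$, applying the shadow-degree hypothesis to the $(k-1)$-set $E$ (which is disjoint from $T$) forces $E \cup \{t_j\} \in \hf$ for every $j \in [r]$. Hence $|\hf^{(1)}| = r|\he|$ for an intersecting $(k-1)$-uniform family $\he$ on $[n]\setminus T$ (intersectingness is obtained by pairing two elements of $\hf^{(1)}$ with different $t_i$'s), while the trivial bound $|\hf^{(\geq 2)}| \leq \sum_{s=2}^{r}\binom{r}{s}\binom{n-r}{k-s}$ handles the rest.

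\textbf{The main obstacle} is the sharp bound $|\he| \leq \binom{n-2r+1}{k-r}$, since the Erd\H{o}s--Ko--Rado theorem applied to $\he$ only gives $|\he| \leq \binom{n-r-1}{k-2}$, which is far too weak. I would close this gap by reapplying the shadow-degree hypothesis to $(k-1)$-shadows of the form $(E\setminus\{e\}) \cup \{t_i\}$ for $e \in E \in \he$, and tracking whether the resulting $r-1$ extensions (other than $F$ itself) land in $\hf^{(1)}$ or in $\hf^{(\geq 2)}$. This should force a dichotomy: either $\tau(\he)$ is large, in which case Tuza's theorem applied to $\he$ controls $|\he|$; or almost every $E \in \he$ contains a common $(r-1)$-element core $K \subset [n]\setminus T$, in which case relabeling $T \cup K$ to $[2r-1]$ embeds $\hf$ into $\hl(n,k,r)$ up to isomorphism, delivering $|\hf| \leq |\hl(n,k,r)|$.
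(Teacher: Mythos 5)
Your opening step (showing $\tau(\hf)\geq r$) is correct and essentially equivalent to the paper's Lemma~\ref{lem-2}, but after that the proposal diverges from the paper and runs into several genuine gaps. The paper does not dichotomize by $\tau(\hf)$. Instead, it passes to a \emph{minimal support} $X$ (a set on which the trace $\hf_{\mid X}$ is still intersecting), observes that the minimal members of the trace form a \emph{critical} intersecting family whose union is $X$, and then applies Tuza's theorem to bound $|X|\leq\binom{2k-1}{k-1}+\binom{2k-4}{k-2}$. The dichotomy is on $p$, the minimum set size in the trace: if $p=r$, Proposition~\ref{prop-fnkk2} forces the $p$-element traces to be $\binom{Y}{r}$ for a $(2r-1)$-set $Y$, and $\hf$ embeds into a copy of $\hl(n,k,r)$; if $p\geq r+1$, a separate branching lemma (Lemma~\ref{lem-1}) bounds the levels of the trace and then one compares binomial coefficients.

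The first concrete gap is your treatment of the case $\tau\geq r+1$. Tuza's theorem does not bound $|\hf|$; it bounds the \emph{ground set size} of a \emph{critical} intersecting family. There is no constant bound $C(k,r)$ on the size of a $k$-uniform intersecting family with $\tau\geq r+1$ unless $r+1=k$: for $r+1<k$, families like $\{F:|F\cap[2t-1]|\geq t\}$ with $t=r+1$ have $\tau=r+1$ yet size of order $\binom{n}{k-r-1}$, which grows with $n$. The correct statement, and what the paper actually proves in its $p\geq r+1$ case, is that the size is $O_{k,r}\bigl(\binom{n}{k-r-1}\bigr)$, one polynomial degree lower than $|\hl(n,k,r)|\sim\binom{2r-1}{r}\binom{n}{k-r}$; getting this requires the support bound and the branching lemma, not a constant bound.

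The second gap is in the main case $\tau=r$. Your estimate $|\hf^{(\geq 2)}|\leq\sum_{s=2}^{r}\binom{r}{s}\binom{n-r}{k-s}$ has leading term of order $\binom{n}{k-2}$, which for $r\geq 4$ \emph{overwhelms} the target $|\hl(n,k,r)|\sim\binom{n}{k-r}$, so this ``trivial bound'' cannot be combined with anything to give the theorem; a transversal of size $r$ simply does not localize $\hf$ enough, which is exactly why the paper works with a minimal support rather than a minimum transversal. Finally, you flag the bound on $|\he|$ as ``the main obstacle'' and only sketch a hoped-for dichotomy; that sketch is not a proof, and in the paper the corresponding structural step (Proposition~\ref{prop-fnkk2}, classifying intersecting $r$-uniform families with shadow degree $\geq r$) is a self-contained argument applied to the minimum-level trace, not to a shadow family $\he$ living off the transversal. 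In short, the correct use of Tuza's theorem here is to control the ground set of the critical trace family, and without that ingredient the proposal does not close.
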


Our proof works for the case $r=3$ as well. However in that case the bounds given by  Balogh, Lemons and Palmer \cite{BLP} are better.

We say that $\hh\subset 2^{[n]}$ is a {\it critical} intersecting family if $\hh$ is intersecting and  $(\hh\setminus \{H\})\cup (H\setminus \{x\})$ is no longer intersecting for any $H\in \hh$ and $x\in H$. The {\it rank} of $\hh$ is defined as $\max\{|H|\colon H\in \hh\}$.

We need the following result of Tuza.

\begin{thm}[Corollary 12 in \cite{tuza}]\label{thm-tuza}
Let $\hh$ be a critical intersecting family with rank $k$. Then
\[
|\cup_{H\in \hh} H| \leq \binom{2k-1}{k-1}+\binom{2k-4}{k-2}.
\]
\end{thm}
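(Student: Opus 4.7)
The plan is to apply the standard shrinking procedure to $\hf$, invoke Tuza's theorem to bound the ground set of the resulting critical intersecting family, and then count $|\hf|$ by traces on that ground set. Starting from $\hh:=\hf$, iteratively replace some $H$ in the current family by $H\setminus\{x\}$ whenever $(\hh\setminus\{H\})\cup(H\setminus\{x\})$ is still intersecting. This greedy procedure terminates at a critical intersecting family $\hh$ of rank at most $k$ with the property that each $F\in\hf$ still contains at least one $H\in\hh$ (its shrunken image $H_F\subseteq F$). Set $V:=\bigcup_{H\in\hh}H$. By Theorem~\ref{thm-tuza},
\[
|V|\leq \binom{2k-1}{k-1}+\binom{2k-4}{k-2}\leq 2\binom{2k-1}{k},
\]
which yields the single-exponential-in-$k$ improvement over the doubly-exponential ground-set bound implicit in~\cite{BLP}.

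The crucial structural step is to show that $|F\cap V|\geq r$ for every $F\in\hf$. Suppose, for contradiction, that some $F\in\hf$ has $|F\cap V|\leq r-1$; pick $y\in F\setminus V$ and set $E:=F\setminus\{y\}$. The shadow degree condition $\delta_{k-1}^+(\hf)\geq r$ applied to $E$ yields $r-1$ further sets $F_i=E\cup\{y_i\}\in\hf$ with $y_i\neq y$, each shrinking to some $H_{F_i}\in\hh$ lying inside $V\cap(E\cup\{y_i\})=(F\cap V)\cup(\{y_i\}\cap V)$. The criticality of $\hh$ (for each $x\in H\in\hh$ there exists $H'\in\hh$ with $H'\cap H=\{x\}$), combined with $y\notin V$ and the pairwise intersection property $H_F\cap H_{F_i}\neq\emptyset$, should force a contradiction, establishing $|F\cap V|\geq r$.

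With $|F\cap V|\geq r$ in hand, each $F\in\hf$ is determined by its trace $T_F:=F\cap V$ (of size at least $r$) and its complement $F\setminus V\subseteq[n]\setminus V$ (of size $k-|T_F|$), giving
\[
|\hf|\leq \sum_{j=r}^{k}\binom{|V|}{j}\binom{n-|V|}{k-j}.
\]
This is to be compared with $|\hl(n,k,r)|=\sum_{j=r}^{2r-1}\binom{2r-1}{j}\binom{n-2r+1}{k-j}$; in both sums the $j=r$ term dominates in the asserted range of $n$. Using the Tuza bound on $|V|$, the estimate $\binom{|V|}{r}\leq |V|^{r}/r!$, and the threshold $n\geq 2(r+1)^{r}k\binom{2k-1}{k}/\binom{2r-1}{r}$, one verifies that the right-hand side is at most $|\hl(n,k,r)|$. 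The intersecting structure of the trace family $\{F\cap V:F\in\hf\}$ on $V$ (inherited from $\hh$ via $H_F\cap H_{F'}\neq\emptyset$) can be invoked to cut down the effective count of admissible traces of each size if a sharper bound is required to absorb the factor $(r+1)^{r}$ appearing in the threshold.

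The main obstacle is the third step: translating the shadow degree condition on $\hf$ into the structural inequality $|F\cap V|\geq r$ via the private-witness criticality of $\hh$. Once this is established, the counting and asymptotic comparison reduce to routine binomial estimates tuned precisely to the threshold $n\geq 2(r+1)^{r}k\binom{2k-1}{k}/\binom{2r-1}{r}$ from the hypothesis.
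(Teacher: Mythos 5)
Your proposal does not prove the statement at issue. The statement is Tuza's bound itself: a critical intersecting family $\hh$ of rank $k$ has $|\cup_{H\in\hh}H|\leq\binom{2k-1}{k-1}+\binom{2k-4}{k-2}$. You never attempt to establish this; you explicitly invoke Theorem~\ref{thm-tuza} as a black box (``By Theorem~\ref{thm-tuza}, $|V|\leq\cdots$'') and then sketch how to deduce the paper's main Theorem~\ref{thm-main1} from it via a shrinking procedure, a claimed inequality $|F\cap V|\geq r$, and a trace count. That is a different theorem, and even within your sketch the crucial step is flagged as an ``obstacle'' and left unproved. Nothing in the proposal bounds the size of the union of a critical intersecting family in terms of its rank.

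For orientation: the paper itself cites Tuza for this bound and does not reprove it, but Section~2 gives a short self-contained proof of the slightly weaker estimate $|\cup\hh|\leq\binom{2k-3}{k-1}(2k-1)$ by Katona's permutation method. By criticality, for every element $i$ of the ground set one can fix $G_i,H_i\in\hh$ with $G_i\cap H_i=\{i\}$. A uniformly random permutation of the ground set ``separates'' $(G_i,H_i)$ (places $G_i$ entirely on one side of $i$ and $H_i$ entirely on the other) with probability at least $2/\bigl(\binom{2k-2}{k-1}(2k-1)\bigr)$, since $|G_i|,|H_i|\leq k$; while $\nu(\hh)=1$ forces each permutation to separate at most one such pair. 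Summing over $i$ yields the bound on the ground set. An argument of that shape --- working directly from criticality and $\nu(\hh)=1$ to bound $|\cup\hh|$ --- is what the statement requires, and it is absent from your proposal.
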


For $k=3$ and $r=2$, we determine $f(n,3,2)$ for the full range.

\begin{thm}\label{thm-main2}
For $n\geq 6$,
$f(n,3,2) =  |\hl(n,3,2)|$.
\end{thm}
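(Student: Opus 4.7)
\textbf{Proof plan for Theorem \ref{thm-main2}.}

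The strategy is to reduce to the Hilton--Milner theorem, observing that the hypothesis $\delta_{2}^{+}(\hf) \geq 2$ by itself excludes the star case. First, I would verify the lower bound by checking that $\hl(n,3,2)$ is intersecting (any two members share at least one element of $[3]$ by pigeonhole) and has shadow degree exactly $2$: a pair $\{i,j\}\subseteq [3]$ is contained in $n-2$ members of $\hl(n,3,2)$, a pair with $|\{i,j\}\cap[3]|=1$ is contained in exactly two members (its two extensions inside $[3]$), and pairs disjoint from $[3]$ do not lie in $\partial \hl(n,3,2)$. The same count yields $|\hl(n,3,2)| = 1 + 3(n-3) = 3n-8$, hence $f(n,3,2)\geq 3n-8$.

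For the upper bound, the main step is to rule out the star case for an arbitrary competitor $\hf$. If $\hf\subseteq \hs_x$ is nonempty and $\{x,y,z\}\in \hf$, then any $F'\in \hf$ containing the pair $\{y,z\}$ must also contain $x$, forcing $F' = \{x,y,z\}$; this gives $|\hf(\{y,z\})|=1$, contradicting $\delta_{2}^{+}(\hf)\geq 2$. So $\hf$ is not a star, and for $n\geq 7$ the Hilton--Milner theorem yields
\[
|\hf| \leq \binom{n-1}{2}-\binom{n-4}{2}+1 = 3n-8,
\]
matching the lower bound exactly.

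The boundary case $n=6$ lies outside the range of Hilton--Milner, but the trivial complementary-pair bound already suffices: each $A \in \binom{[6]}{3}$ is disjoint from $[6]\setminus A \in \binom{[6]}{3}$, so any intersecting family on $[6]$ contains at most one member of each of the $\tfrac12\binom{6}{3}=10$ complementary pairs, giving $|\hf|\leq 10 = 3\cdot 6-8$, which again matches $|\hl(6,3,2)|$. The only subtle point in the entire argument is recognising that the shadow-degree hypothesis excludes stars and thereby hands the problem to Hilton--Milner; no further obstacle arises.
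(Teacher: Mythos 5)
Your proof is correct, and it takes a genuinely different and shorter route than the paper. The key observation you make is that any nonempty subfamily of a star has shadow degree exactly $1$, so $\delta_2^{+}(\hf)\geq 2$ already forces $\hf$ to be non-star, which hands the problem directly to Hilton--Milner. You then exploit the numerical coincidence, special to $k=3,\ r=2$, that
\[
\binom{n-1}{2}-\binom{n-4}{2}+1 = 3n-8 = |\hl(n,3,2)|,
\]
so the Hilton--Milner bound is exactly the target bound, and the boundary case $n=6$ is settled by the trivial complementary-pair (or Erd\H{o}s--Ko--Rado) estimate. All the side computations in your write-up (the value of $\delta_2^{+}(\hl(n,3,2))$, the size count, the star exclusion) check out.

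The paper instead runs a structural argument through minimal supports, Fact~\ref{fact-3.3}, Proposition~\ref{prop-fnkk2}, and a sequence of graph-theoretic claims about the link $\hht(x)$, ultimately deriving a contradiction when the minimal support is all of $[n]$ and $n\geq 7$. That machinery is heavier, but it is part of a uniform framework used for the general $(k,r)$ case in Theorem~\ref{thm-main1} (where no Hilton--Milner shortcut exists, since $|\hl(n,k,r)|$ does not coincide with the Hilton--Milner bound for $r\geq 3$), and it also yields extra structural information: the paper identifies that for $n=6$ there are exactly three extremal configurations, while for $n\geq 7$ the extremal family must be a copy of $\hl(n,3,2)$. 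Your argument gives only the extremal value, which is all the theorem statement asks for. In short: your proof is a clean, correct shortcut available precisely because of the $(k,r)=(3,2)$ coincidence; the paper's longer proof buys structural uniqueness and coherence with the general-$r$ method.
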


\section{A slightly weaker bound for Tuza's result}

Even though Tuza's proof is not too long it is quite involved. Let us include here a short proof inspired by Katona \cite{Katona74} for a slightly weaker result.

\begin{prop}
Suppose that $\hh\subset 2^{[n]}$ is a critical intersecting hypergraph of rank $k$, $\cup \hh =[n]$. Then
\[
n\leq \binom{2k-3}{k-1}(2k-1).
\]
\end{prop}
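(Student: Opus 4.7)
The plan is to mimic Katona's probabilistic proof of the Bollob\'as set-pair inequality, assigning to each ground-set element a pair of random-permutation events whose probabilities are forced to sum to at most $1$.

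For each $y \in [n]$, the covering condition $\cup_{H \in \hh} H = [n]$ provides some $H_1(y) \in \hh$ with $y \in H_1(y)$, and the criticality of $\hh$ applied to $(H_1(y), y)$ then yields $H_2(y) \in \hh$ with $H_1(y) \cap H_2(y) = \{y\}$. Set $A_y = H_1(y) \setminus \{y\}$ and $B_y = H_2(y) \setminus \{y\}$; these are disjoint, and $|A_y|, |B_y| \leq k-1$.

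Let $\pi$ be a uniformly random linear order of $[n]$ and define $E_y^+$: in $\pi$ every element of $A_y$ precedes $y$ and every element of $B_y$ follows $y$. Let $E_y^-$ be the analogous event with the roles of $A_y$ and $B_y$ swapped. A direct multinomial calculation gives
\[
\Pr[E_y^+] = \Pr[E_y^-] = \frac{|A_y|!\,|B_y|!}{(|A_y|+|B_y|+1)!} \geq \frac{((k-1)!)^2}{(2k-1)!}.
\]
Once the $2n$ events $\{E_y^+, E_y^-\}_{y \in [n]}$ are shown to be pairwise disjoint, summing their probabilities yields $2n \cdot \frac{((k-1)!)^2}{(2k-1)!} \leq 1$, which rearranges via the identity $\binom{2k-2}{k-1} = 2\binom{2k-3}{k-1}$ to the desired bound $n \leq (2k-1)\binom{2k-3}{k-1}$.

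The main obstacle is the disjointness. Fix $y_1 \neq y_2$ and signs $\sigma_1, \sigma_2 \in \{+,-\}$. Since $\hh$ is intersecting, each of the four sets $H_i(y_1) \cap H_j(y_2)$ for $i,j \in \{1,2\}$ is nonempty, and any element $z$ of such an intersection forces a relative order between $y_1$ and $y_2$ if both events are assumed to occur (both for $z \notin \{y_1,y_2\}$ and in the degenerate subcases $z = y_1$ or $z = y_2$). A short case analysis shows: for $\sigma_1 = \sigma_2$ the two \emph{cross} intersections $H_1(y_1) \cap H_2(y_2)$ and $H_2(y_1) \cap H_1(y_2)$ yield contradictory orderings of $y_1$ and $y_2$, while for $\sigma_1 \neq \sigma_2$ the two \emph{parallel} intersections $H_1(y_1) \cap H_1(y_2)$ and $H_2(y_1) \cap H_2(y_2)$ do so. The degenerate case $A_y = \emptyset$ or $B_y = \emptyset$ would require a singleton in $\hh$, which in a critical intersecting family of rank $k \geq 2$ forces $\hh = \{\{y\}\}$ and is therefore vacuous.
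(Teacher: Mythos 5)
Your proof is correct and uses essentially the same Katona-style random-permutation argument as the paper: for each ground-set element you extract a disjoint pair via criticality, show the corresponding separation events have probability at least $\tfrac{1}{(2k-1)\binom{2k-2}{k-1}}$, and use the intersecting property to make the events pairwise disjoint. The only cosmetic difference is that you split each "separation" into two signed events and phrase the count probabilistically, whereas the paper keeps a single undirected separation event per element with an extra factor of $2$; the arithmetic and the disjointness argument are identical.
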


\begin{proof}
For $1\leq i\leq n$ let us fix a pair $(G_i,H_i)$, $G_i,H_i\in \hh$  with $G_i\cap H_i=\{i\}$. Let $\pi=(x_1,\ldots,x_n)$ be an arbitrary permutation of $[n]$. We say that $\pi$ separates $(G_i,H_i)$ if letting $j$ be defined by $x_j=i$, either $G_i\subset \{x_1,\ldots,x_j\}$ and $H_i\subset \{x_j,\ldots,x_n\}$ or $H_i\subset \{x_1,\ldots,x_j\}$ and $G_i\subset \{x_j,\ldots,x_n\}$. Set $g=|G_i|$, $h=|H_i|$, $|G_i\cup H_i|=g+h-1$.

Whether $\pi$ separates $(G_i,H_i)$ depends only on the relative position of the elements of $G_i\cup H_i$. Easy computation shows that the number of $\pi$ separating $(G_i,H_i)$ is
\[
2\frac{n!(g-1)!(h-1)!}{(g+h-1)!} =\frac{2n!}{\binom{g+h-2}{g-1}(g+h-1)}\geq \frac{2n!}{\binom{2k-2}{k-1}(2k-1)}.
\]
Should the same $\pi$ separate $(G_i,H_i)$ and $(G_j,H_j)$ with $x_i$ preceding $x_j$ in $\pi$, either $G_i$ or $H_i$ completely precedes either $G_j$ or $H_j$, contradicting $\nu(\hh)=1$. Consequently,
\[
n\times \frac{2n!}{\binom{2k-2}{k-1}(2k-1)} \leq n!.
\]
It follows that
\[
|\cup \hh| =n \leq \binom{2k-3}{k-1} (2k-1).
\]
\end{proof}

\section{Proof of Theorem \ref{thm-main1}}

\begin{prop}\label{prop-fnkk2}
Let $\hh\subset \binom{[n]}{k}$ be a non-empty intersecting family with $\delta_{k-1}^+(\hh)\geq k$. Then $\hh=\binom{Y}{k}$ for some $Y\subset[n]$ with  $|Y|=2k-1$.
\end{prop}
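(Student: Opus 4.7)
Fix an arbitrary $H \in \hh$. The central step is to show that $s := \min\{|F \cap H| : F \in \hh\} = 1$. Suppose $s \geq 2$ and let $F_0 \in \hh$ attain this minimum. For any $z \in F_0 \cap H$, the set $E := F_0 \setminus \{z\}$ belongs to $\partial \hh$ and has $|E \cap H| = s - 1$. Every extension $E \cup \{y\} \in \hh$ must satisfy $|(E \cup \{y\}) \cap H| \geq s$, forcing $y \in H \setminus E$; hence $|\hh(E)| \leq |H \setminus E| = k - s + 1$. Combined with $|\hh(E)| \geq k$ this gives $s \leq 1$, a contradiction. So $s = 1$ and some $F \in \hh$ satisfies $F \cap H = \{v\}$ for a unique $v \in H$.

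Since $F \cap (H \setminus \{v\}) = \emptyset$, each element of $\hh(H \setminus \{v\})$ must lie in $F$ for the intersecting property, so $\hh(H \setminus \{v\}) \subseteq F$. Combined with $|\hh(H \setminus \{v\})| \geq k = |F|$, this yields $\hh(H \setminus \{v\}) = F$. Write $F = \{v\} \cup T$ with $T \subset [n] \setminus H$, $|T| = k - 1$, and set $Y := H \cup T$, so $|Y| = 2k - 1$. Applying the same reasoning symmetrically to the pair $(F, H)$ at common element $v$ gives $\hh(T) = H$, so $T \cup \{v'\} \in \hh$ for every $v' \in H$. Each such $T \cup \{v'\}$ has intersection $\{v'\}$ with $H$, so a third application of the argument (with $T \cup \{v'\}$ as witness) shows $\hh(H \setminus \{v'\}) = T \cup \{v'\}$ for every $v' \in H$.

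To complete the proof I show $\hh = \binom{Y}{k}$. For $\binom{Y}{k} \subseteq \hh$, I iterate in the Johnson graph $J(2k-1, k)$ on $\binom{Y}{k}$: given $H' \in \hh \cap \binom{Y}{k}$ and any $u \in H'$, locate a witness $F' \in \hh \cap \binom{Y}{k}$ with $F' \cap H' = \{u\}$, then repeat the chain of the previous paragraph to conclude $\hh(H' \setminus \{u\}) = (Y \setminus H') \cup \{u\}$; this places every Johnson neighbour $(H' \setminus \{u\}) \cup \{y\}$ with $y \in Y \setminus H'$ into $\hh$. Connectedness of $J(2k-1, k)$ then propagates membership from $H$ to all of $\binom{Y}{k}$. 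For the reverse inclusion, if $G \in \hh$ satisfied $G \not\subseteq Y$, then $|Y \setminus G| \geq k$, so any $Z \in \binom{Y \setminus G}{k} \subseteq \binom{Y}{k} \subseteq \hh$ would be disjoint from $G$, contradicting intersecting.

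The principal obstacle is the bootstrap in the Johnson-graph iteration: the witness $F'$ used at a given step must already be known to lie in $\hh \cap \binom{Y}{k}$. This is handled by a secondary induction on the Johnson-graph distance from $H$, in which both the source $H'$ and the witness $F'$ are reached through swap moves validated at earlier distances; the base case is the explicit family $\{H\} \cup \{T \cup \{v'\} : v' \in H\} \cup \{(H \setminus \{v'\}) \cup \{t\} : v' \in H,\ t \in T\}$ produced in the previous paragraph.
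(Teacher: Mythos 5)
Your Steps 1--4 are correct and track the paper's Claim~5 closely: the minimality argument over $|F\cap H|$ plus $\delta_{k-1}^+\ge k$ gives a partner $F$ with $|F\cap H|=1$, and the ``disjoint pair'' trick ($\hh(H\setminus\{v\})\subseteq F$, so $\hh(H\setminus\{v\})=F$ by cardinality) pins down $\hh(H\setminus\{v'\})=T\cup\{v'\}$ for every $v'\in H$. That much is solid.

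The gap is in Step~5, and it is exactly the ``bootstrap'' you flag. The secondary induction on Johnson-graph distance from $H$ does not work as sketched: for a source $H'$ at distance $d$ from $H$, the required witness $F'=(Y\setminus H')\cup\{u\}$ sits at distance $k-d$ or $k-d-1$ from $H$, which is \emph{farther} from $H$ whenever $d<k/2$. So membership of $F'$ in $\hh$ cannot be ``validated at an earlier distance'' --- the induction hypothesis never covers it. The explicit base family you list (sets at distances $0$, $1$, and $k-1$) illustrates the problem: the natural witnesses already jump across the Johnson graph rather than growing outward from $H$.

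The clean repair inverts your order: first show $\hh\subseteq\binom{Y}{k}$, then deduce equality. For the containment, suppose some $G\in\hh$ has $G\not\subseteq Y$, chosen with $|G\cap H|$ minimal among such sets. If $|G\cap H|=\{v\}$ then $\hh(H\setminus\{v\})\subseteq G$ forces $G=T\cup\{v\}\subset Y$, contradiction; if $|G\cap H|\ge 2$ then the same ``delete $z\in G\cap H$ and re-extend outside $H$'' move from your Step~1 yields $G'\in\hh$, $G'\not\subseteq Y$, with $|G'\cap H|<|G\cap H|$, again a contradiction. (This is essentially the paper's Claims~5--6 and its final minimality argument.) Once $\hh\subseteq\binom{Y}{k}$, every $E\in\partial\hh$ lies in $\binom{Y}{k-1}$ and has at least $k$ extensions, all inside $Y\setminus E$ which has exactly $k$ elements; hence $\hh(E)=Y\setminus E$, and from any $K\in\hh$ the Johnson-graph connectedness of $\binom{Y}{k}$ now \emph{does} propagate membership to everything, since no external witness is needed. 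Your reverse-inclusion paragraph then becomes unnecessary.
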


\begin{proof}
Let us first prove the following claim.
\begin{claim}\label{claim-5}
To every $H\in \hh$ there exists $H'\in \hh$ with $|H\cap H'|=1$.
\end{claim}
\begin{proof}
To fixed $H\in \hh$ choose $G\in \hh$ with $|G\cap H|$ minimal. By $\nu(\hh)=1$, $G\cap H\neq \emptyset$. Say $t=|G\cap H|\geq 2$. Fix $G_0\in \binom{G}{k-1}$, $|G_0\cap H|=|G\cap H|-1=t-1\geq 1$. Then $|H\setminus G_0|=k-(t-1)<k$. Hence there exists $x\in [n]$ such that $G_0\cup \{x\}\in \hh$ and $x\notin H$. Thus
\[
|(G_0\cup \{x\})\cap H| =|G_0\cap H| <|G\cap H|,
\]
a contradiction.
\end{proof}

For notational convenience let $H_1=[k]$, $H_2=[k,2k-1]\in \hh$.

\begin{claim}\label{claim-6}
Suppose that $|G\cap H_i|=1$ ($i=1$ or 2) then $G\subset [2k-1]$.
\end{claim}
\begin{proof}
 By symmetry let $i=1$. Define $j$ by $G\cap H_1=\{j\}$. Using $|\hh([k+1,2k-1])|\geq k$ and $\nu(\hh)=1$, $\tilde{H} :=[k+1,2k-1]\cup \{j\}\in \hh$. If $\tilde{H}=G$ we are done. Otherwise $|\tilde{H}\cap G|<k$. Hence there exists $x\notin \tilde{H}\cap G$, $\tilde{H}_1:=([k]\setminus \{j\})\cup \{x\}\in \hh$. Then either $\tilde{H}_1\cap \tilde{H}$ or  $\tilde{H}_1\cap G$ is empty, a contradiction.
\end{proof}

To prove Proposition \ref{prop-fnkk2} assume indirectly that there exists $G\in \hh$ such that $G\not\subset [2k-1]$ and  choose $|G\cap H_1|$ to be minimal. By Claim \ref{claim-6} $|G\cap H_1|=:t\geq 2$. Fix $G_0\in \binom{G}{k-1}$ with $|G_0\cap H_1|=t-1$. Since $|H_1\setminus G_0|<k$ there is $x\notin H_1$ with $\tilde{G}:=G_0\cup \{x\}$ in $\hh$. Then $\tilde{G}\not\subset [2k-1]$, $|\tilde{G}\cap H_1|=t-1$, the final contradiction.
\end{proof}

Let $X$ be a finite set. For $\hh\subset 2^X$, let  $p(\hh)=\min\{|H|\colon H\in \hh\}$. For $p(\hh)\leq i\leq k$, define
\[
\hh^{(i)} =\left\{H\in \hh\colon |H|=i\right\}.
\]

\begin{lem}\label{lem-1}
Let $\hh\subset 2^X$ be  an intersecting family with $p(\hh)=p$. If $\tau(\hh^{(p)})\geq r$ then for $p\leq i\leq k$,
\[
|\hh^{(i)}| \leq p^r \binom{|X|-r}{i-r}.
\]
\end{lem}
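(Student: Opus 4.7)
The plan is to construct, for each $H\in \hh^{(i)}$, an $r$-subset $\{x_1,\dots,x_r\}\subset H$ by a greedy branching process, and then bound $|\hh^{(i)}|$ by counting the leaves of the resulting tree times the number of $i$-sets containing a fixed $r$-set. At the root I fix any $P_1\in \hh^{(p)}$; by the intersecting property every $H\in \hh^{(i)}$ meets $P_1$, so $H\ni x_1$ for some $x_1\in P_1$, giving at most $|P_1|=p$ children.

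Inductively, along any branch with distinct elements $x_1,\dots,x_j\in H$ and $j<r$, the set $\{x_1,\dots,x_j\}$ has size $j<r\le \tau(\hh^{(p)})$ and hence is not a transversal of $\hh^{(p)}$. So there exists $P_{j+1}\in \hh^{(p)}$ disjoint from $\{x_1,\dots,x_j\}$. The intersecting property forces $H\cap P_{j+1}\ne \emptyset$, and any $x_{j+1}\in H\cap P_{j+1}$ is automatically distinct from $x_1,\dots,x_j$. Thus each node branches into at most $p$ children, and after $r$ levels each $H\in \hh^{(i)}$ has been assigned to some leaf whose label $\{x_1,\dots,x_r\}$ is a subset of $H$.

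Since the tree has at most $p^r$ leaves and each leaf label is contained in at most $\binom{|X|-r}{i-r}$ members of $\binom{X}{i}$, the desired estimate $|\hh^{(i)}|\le p^r\binom{|X|-r}{i-r}$ follows. I do not anticipate a serious obstacle; the only point needing care is that the witness $P_{j+1}$ depends on the entire branch $(x_1,\dots,x_j)$, not merely on its length, but since the branching factor is uniformly bounded by $p$ this does not affect the leaf count. As a sanity check, note that the hypothesis $\tau(\hh^{(p)})\ge r$ actually forces $r\le p$ (any $P\in \hh^{(p)}$ is itself a transversal of $\hh^{(p)}$ by the intersecting property), so $i\ge p\ge r$ and the binomial $\binom{|X|-r}{i-r}$ is nondegenerate.
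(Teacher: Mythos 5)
Your proof is correct and takes essentially the same approach as the paper: both construct a branching process of depth $r$ with branching factor $p$, using $\tau(\hh^{(p)})\geq r$ to find at each step a $P_{j+1}\in\hh^{(p)}$ disjoint from the current sequence and the intersecting property to force $H$ to meet it. The paper phrases this as a set of sequences rather than a tree, but the argument and the final count are identical; your sanity check that $r\leq p$ is a nice (correct) addition not made explicit in the paper.
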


\begin{proof}
We prove the lemma by a branching process. During the proof {\it a sequence} $S=(x_1,x_2,\ldots,x_\ell)$ is an ordered sequence of distinct elements of $X$ and we use $\widehat{S}$ to denote the underlying unordered set $\{x_1,x_2,\ldots,x_\ell\}$.
 At the first stage, we choose $H_1\in \hh^{(p)}$ and define $|H_1|$ sequences $(x_1)$, $x_1\in H_1$.

In each subsequent stage, we pick a sequence $S=(x_1,\ldots,x_\ell)$ with $\ell<r$. By $\tau(\hh^{(p)})\geq r$ one can choose $H_S\in \hh^{(p)}$ with $\widehat{S}\cap H_S=\emptyset$.  Then replace $S$ by the $p$ sequences $(x_1,\ldots,x_\ell,y)$ with $y\in H_S$. Eventually we shall obtain $p^r$ sequences of length $r$.

\begin{claim}
For each $H\in \hh$, there is a sequence $S$  with $\widehat{S}\subset H$.
\end{claim}

\begin{proof}
Let $S=(x_1,\ldots,x_\ell)$  be a sequence of maximal length that occurred
at some stage of the branching process satisfying $\widehat{S}\subset H$. Let us suppose indirectly that $\ell<r$.   Since  $H_1\cap H\neq \emptyset$ at the first stage, there is a sequence $(x_1)$ with $x_1\in H_1$ such that $\{x_1\}\subset H$. Thus $\ell\geq 1$. Since $\ell<r$, at some stage $S$ was picked and there is some  $H_S\in \hh^{(p)}$ with $\widehat{S}\cap H_S=\emptyset$ being chosen. Since $H\cap H_S\neq \emptyset$, there is some $y\in H\cap H_S$. Then $(x_1,\ldots,x_\ell,y)$ is a longer sequence satisfying $\{x_1,\ldots,x_\ell,y\}\subset H$, contradicting the maximality of $\ell$.
\end{proof}
Since there are only $\binom{|X|-r}{i-r}$ choices for the remaining elements of $H\in\hh^{(i)}$, the lemma is proven.
\end{proof}

\begin{lem}\label{lem-2}
Let $\hf\subset \binom{[n]}{k}$ be an intersecting family. Then $\tau(\hf)\geq \delta_{k-1}^+(\hf)$.
\end{lem}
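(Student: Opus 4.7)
The plan is by contradiction. Set $r := \delta_{k-1}^+(\hf)$ and let $T$ be a minimum transversal of $\hf$, so $|T| = \tau(\hf)$. Assuming $|T| < r$, the goal is to exhibit a set $E \in \partial \hf$ for which $|\hf(E)| \leq |T| < r$, directly contradicting the definition of the shadow degree.

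The first step is to produce a $(k-1)$-set in $\partial \hf$ that is disjoint from $T$. Here I would invoke the minimality of $T$: pick any $x \in T$; since $T \setminus \{x\}$ is not a transversal, there exists $F \in \hf$ with $F \cap (T \setminus \{x\}) = \emptyset$. Because $T$ itself is a transversal, this forces $F \cap T = \{x\}$, and then $E := F \setminus \{x\}$ is a $(k-1)$-element set with $E \in \partial \hf$ and $E \cap T = \emptyset$.

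The second step combines the intersecting property with the disjointness $E \cap T = \emptyset$. Every $F' \in \hf$ containing $E$ can be written as $F' = E \cup \{y\}$; since $F' \cap T \neq \emptyset$ while $E$ misses $T$, we must have $y \in T$. Thus $\hf(E)$ (viewed as the set of elements $y$ with $E \cup \{y\} \in \hf$) is contained in $T$, so
\[
r \leq \delta_{k-1}^+(\hf) \leq |\hf(E)| \leq |T| < r,
\]
which is the desired contradiction.

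There is essentially no technical obstacle to overcome; the argument is a one-line application of the definitions once the right $(k-1)$-set $E$ has been chosen. The only point that deserves explicit mention is the standard fact that a \emph{minimum} transversal contains, for each of its elements $x$, a ``private edge'' $F \in \hf$ whose unique intersection with $T$ is $\{x\}$, and this follows at once from the minimality.
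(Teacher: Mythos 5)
Your proof is correct, but it takes a genuinely different route from the paper's. The paper argues that \emph{every} set $D$ of size $\delta_{k-1}^+(\hf)-1$ fails to be a transversal: pick $F\in\hf$ with $|D\cap F|$ minimal (and positive, arguing indirectly), choose $x\in D\cap F$, and use the shadow degree of $F\setminus\{x\}$ to swap $x$ out for some $y\notin D$, shrinking $|D\cap F|$ and contradicting minimality. You instead start from a \emph{minimum} transversal $T$ and use its minimality to extract a private edge $F$ with $F\cap T=\{x\}$; then $E=F\setminus\{x\}$ lies in $\partial\hf$ and is disjoint from $T$, so every edge extending $E$ must pick up its intersection with $T$ from the single added element, giving $|\hf(E)|\le|T|$ and a contradiction if $|T|<\delta_{k-1}^+(\hf)$. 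The two arguments are dual in flavor: the paper does an iterative element-replacement to push an edge off $D$, while you do a one-shot count of extensions of a shadow element forced into $T$. Yours leans on minimality of the transversal (to get the private edge), whereas the paper's works for an arbitrary $(\delta-1)$-set and never invokes a minimum transversal. Both are short and correct; your version is arguably a bit more transparent, the paper's a bit more self-contained. (A trivial edge case: your argument needs $T\neq\emptyset$ to pick $x$, which holds whenever $\hf\neq\emptyset$, the only case of interest.)
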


\begin{proof}
Set $\delta=\delta_{k-1}^+(\hf)$ and let $D$ be an arbitrary set with $|D|=\delta-1$. We want to prove that $D\cap F=\emptyset$ for some $F\in \hf$. Argue indirectly pick an $F\in \hf$ with $0\neq |D\cap F|$ minimal. Choose $x\in D\cap F$ and consider $F\setminus \{x\}$. Note that $|D\setminus \{x\}|=\delta-2$ and there are at least $\delta-1$ choices for $y\neq x$ with $(F\setminus \{x\})\cup \{y\}=:F'\in \hf$. By choosing $y\notin D$, $|D\cap F'|<|D\cap F|$, a contradiction.
\end{proof}

Let $\hf\subset \binom{[n]}{k}$ be an intersecting family. Define the {\it trace} $\hf_{\mid X}$ of $\hf$ as the set $\{F\cap X\colon F\in \hf\}$.  We say that $X$ is a  {\it support}  of $\hf$ if  $\hf_{\mid X}$ is intersecting.

\begin{proof}[Proof of Theorem \ref{thm-main1}]
Let $\hf\subset \binom{[n]}{k}$ be an intersecting family with $\delta_{k-1}^+(\hf)\geq r$. Let $X$ be a support of $\hf$ of minimum size. Let $\hh=\hf_{\mid X}$ and let $\hb\subset \hh$ be the family of minimal (for containment) members of $\hh$. It is easy to check that $\hb$ is a critical intersecting family and $\cup_{B\in \hb} B=X$. Then by Theorem \ref{thm-tuza}
\begin{align}\label{ineq-key1}
|X| =|\cup_{B\in \hb} B| \leq \binom{2k-1}{k-1}+\binom{2k-4}{k-2}\leq \binom{2k}{k-1}.
\end{align}

Let $p=p(\hh)$.

\begin{claim}\label{claim-hph}
$\hh^{(p)}$ is intersecting with $\delta_{p-1}^+(\hh^{(p)})\geq r$.
\end{claim}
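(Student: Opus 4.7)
The intersecting part of the claim is immediate, since $\hh^{(p)}\subseteq\hh=\hf_{\mid X}$ is intersecting by the hypothesis that $X$ is a support.

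For the shadow-degree bound, my plan is to take an arbitrary $E\in\partial\hh^{(p)}$ (so $|E|=p-1$ and $E\subset H_0$ for some $H_0\in\hh^{(p)}$) and lift it to a specific $(k-1)$-set $D\subset[n]$ whose trace on $X$ is \emph{exactly} $E$. Once such a $D$ is produced, the hypothesis $\delta_{k-1}^+(\hf)\geq r$ furnishes at least $r$ distinct elements $y\notin D$ with $D\cup\{y\}\in\hf$, and then $(D\cup\{y\})\cap X$ is either $E$ or $E\cup\{y\}$ depending on whether $y\in X$. The first case would place a set of size $p-1$ into $\hh$, contradicting $p=p(\hh)$ (and, in the degenerate case $E=\emptyset$, contradicting $\hh$ being intersecting). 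Hence each $y$ must lie in $X$ and contribute a distinct member $E\cup\{y\}$ of $\hh^{(p)}(E)$, giving $|\hh^{(p)}(E)|\geq r$.

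The construction of $D$ is the single point requiring care: pick any $F_0\in\hf$ with $F_0\cap X=H_0$, let $z$ denote the unique element of $H_0\setminus E$, and set $D:=F_0\setminus\{z\}$. Then $D\cap X=H_0\setminus\{z\}=E$ precisely, and $D\in\partial\hf$ since $D\subset F_0\in\hf$. I do not foresee any genuine obstacle beyond making exactly this choice of element to discard from $F_0$ so that the trace collapses to $E$ rather than to some proper superset; everything after that is a direct consequence of the shadow-degree hypothesis on $\hf$ together with the minimality of $p$.
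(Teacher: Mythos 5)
Your proof is correct and takes essentially the same route as the paper: in both arguments one fixes $F_0\in\hf$ whose trace on $X$ is a minimum-size member $H_0\in\hh^{(p)}$, forms the $(k-1)$-set $D=(F_0\setminus X)\cup E$ (your $F_0\setminus\{z\}$ is exactly this set), invokes $\delta_{k-1}^+(\hf)\geq r$ to get $r$ extensions of $D$ inside $\hf$, and rules out extension points outside $X$ via the minimality of $p$. The only cosmetic difference is that you phrase the starting point as an arbitrary $E\in\partial\hh^{(p)}$ while the paper phrases it as an arbitrary $R\in\binom{H}{p-1}$; these are interchangeable.
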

\begin{proof}
Since $X$ is a support, $H\cap H'\neq \emptyset$ for $H,H'\in \hh^{(p)}$. Thus $\hh^{(p)}$ is intersecting.

Fix $F\in \hf$ with $H=F\cap X\in \hh^{(p)}$ and let $R\in \binom{H}{p-1}$. Since $\delta_{k-1}^+(\hf)\geq r$, there are $r$ distinct elements $x_1,x_2,\ldots,x_r$ such that $F_j':=((F\setminus X)\cup R\cup \{x_j\})\in \hf$.  If $x_j\notin X$ then $F_j'\cap X=R\in \hh$ and $|R|=p-1$, contradicting the minimality of $p$. Thus $x_j\in X$ for all $j=1,2,\ldots,r$, proving $\delta_{p-1}^+(\hh^{(p)})\geq r$.
\end{proof}

%

Note that $F\cap X$ is a transversal of $\hf$ for any $F\in \hf$. By Lemma \ref{lem-2} we infer that $p\geq \tau(\hf)\geq \delta^+_{k-1}(\hf)\geq r$.
If $p=r$, then by Claim \ref{claim-hph} and Proposition \ref{prop-fnkk2} we infer that $\hh^{(p)}=\binom{Y}{r}$ for some $Y\in \binom{X}{2r-1}$. We claim that $|F\cap Y|\geq r$ for all $F\in \hf$. Indeed the opposite would mean that $F\cap H=\emptyset$ for some $H\in \binom{Y}{r}$. Choose  $E\in \binom{[n]\setminus X}{k-r}$ such that $H\cup E\in \hf$.  Then $F\cap (H\cup E)\cap X=F\cap H=\emptyset$, contradicting the fact that $X$ is a support. Consequently $\hf\subset\{F\in \binom{[n]}{k}\colon |F\cap Y|\geq r\}$, i.e., $\hf$ is contained in an isomorphic copy of $\hl(n,k,r)$. Thus we may assume that $p\geq r+1$.

By Claim \ref{claim-hph} and Lemma \ref{lem-2}, we have $\tau(\hh^{(p)})\geq r$. Then by Lemma \ref{lem-1},
\[
|\hf|\leq \sum_{p\leq i\leq k} |\hh^{(i)}| \binom{n-|X|}{k-i} \leq \sum_{p\leq i\leq k} p^r \binom{|X|-r}{i-r} \binom{n-|X|}{k-i}.
\]
Note that
\[
\frac{\binom{|X|-r}{i-r} \binom{n-|X|}{k-i}}{\binom{|X|-r}{i+1-r} \binom{n-|X|}{k-i-1}} = \frac{i+1-r}{|X|-i} \cdot \frac{n-|X|-k+i+1}{k-i}.
\]
Since both $\frac{i+1-r}{|X|-i}$ and $\frac{n-|X|-k+i+1}{k-i}$ are monotone increasing functions of $i$, plugging in $i=r+1$ we obtain
\[
\frac{2}{|X|-r-1}\cdot \frac{n-|X|-k+r+2}{k-r-1} > \frac{2(k-1)(|X|-1)}{(k-3)(|X|-1)}>2
\]
for $n\geq k\binom{2k}{k-1}>k|X|$. Thus,
\[
|\hf|\leq \sum_{p\leq i\leq k} |\hh^{(i)}| \binom{n-|X|}{k-i} < 2 p^r \binom{|X|-r}{p-r} \binom{n-|X|}{k-p}.
\]
Since $p\geq r+1>r$ implies $\left(\frac{p}{p+1}\right)^r\geq \left(\frac{r}{r+1}\right)^r\geq \frac{1}{e}$, for $n\geq (2k+1)\binom{2k}{k-1}>(2k+1)|X|$ we have
\begin{align*}
\frac{p^r\binom{|X|-r}{p-r} \binom{n-|X|}{k-p}}{(p+1)^r\binom{|X|-r}{p+1-r} \binom{n-|X|}{k-p-1}} \geq \frac{1}{e}\cdot\frac{2}{|X|-r-1}\cdot \frac{n-|X|-k+r+2}{k-r-1}\geq \frac{ 4k(|X|-1)}{ek(|X|-1)}> 1.
\end{align*}
By $p\geq r+1$, it follows that
\[
|\hf| \leq  2p^r \binom{|X|-r}{p-r} \binom{n-|X|}{k-p}<2(r+1)^r (|X|-r) \binom{n-|X|}{k-r-1}.
\]
By the minimality of $X$, for any $x\in X$ there exists $H_1,H_2\in \hh$ such that $H_1\cap H_2=\{x\}$. It follows that $|X|\geq |H_1\cup H_2|\geq 2r+1$. Thus, for $n\geq 2(r+1)^rk \frac{\binom{2k-1}{k}}{\binom{2r-1}{r}}$ we have
\begin{align*}
|\hf| <2(r+1)^r (|X|-r) \binom{n-2r}{k-r-1}\leq \binom{2r-1}{r}\binom{n-2r+1}{k-r}< |\hl(n,k,r)|.
\end{align*}
\end{proof}

\section{Proof of Theorem \ref{thm-main2}}

A collection of sets $F_1,\ldots,F_\ell$ is called a {\it sunflower of size $\ell$ with  kernel $C$} if $F_i\cap F_j=C$ for all distinct $i,j\in \{1,2,\ldots,\ell\}$.

\begin{prop}\label{prop-1}
Suppose that $\hf\subset \binom{[n]}{k}$ is intersecting and $\delta_{k-1}^+(\hf)\geq 2$. Then $\hf$ contains no sunflower of size 3 and kernel of size 1.
\end{prop}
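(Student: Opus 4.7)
The plan is to argue by contradiction. Suppose $F_1, F_2, F_3 \in \hf$ form a sunflower of size $3$ with kernel $\{x\}$, so that $F_i \cap F_j = \{x\}$ whenever $i \neq j$. The key observation is that in this setup, the shadow of $F_1$ already forces the existence of a ``witness'' set disjoint from $\{x\}$, and intersectingness with $F_2$ and $F_3$ will then be impossible.

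Concretely, I would look at the $(k-1)$-element set $E := F_1 \setminus \{x\}$. Since $E \subset F_1 \in \hf$, we have $E \in \partial \hf$, so the hypothesis $\delta_{k-1}^+(\hf) \geq 2$ yields some $F_1' \in \hf$ with $F_1' \neq F_1$ and $E \subset F_1'$. Writing $F_1' = E \cup \{y\}$, we must have $y \neq x$, hence $x \notin F_1'$. At this point the proof is essentially done: since $E = F_1 \setminus \{x\}$ is disjoint from $F_2 \setminus \{x\}$ and from $F_3 \setminus \{x\}$ (by the sunflower property) and $x \notin F_1'$, the intersections $F_1' \cap F_2$ and $F_1' \cap F_3$ can only sit inside $\{y\}$. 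Intersectingness then forces $y \in F_2$ and $y \in F_3$, whence $y \in F_2 \cap F_3 = \{x\}$, contradicting $y \neq x$.

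There is really no obstacle here; the only thing to be slightly careful about is making sure that the new set $F_1'$ is indeed forced to avoid $x$, which is why one should apply the shadow-degree hypothesis to the specific $(k-1)$-set $F_1 \setminus \{x\}$ rather than to an arbitrary shadow of $F_1$. Once this choice is made, the contradiction drops out in one line from the sunflower structure.
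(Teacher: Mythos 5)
Your proof is correct and is essentially the same as the paper's: both apply the shadow-degree hypothesis to one petal minus the kernel to produce a second set $F'$ containing it, then observe that $F'$ cannot meet both remaining petals within its single free element. The only cosmetic difference is that the paper phrases the final contradiction as $|G|\geq k+1$ while you phrase it as $y=x$; these are the same argument.
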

\begin{proof}
Let $F_1,F_2,F_3\in \hf$ with $F_i\cap F_j=\{x\}$ for $1\leq i<j\leq 3$. Consider $G\in \hf$, $G\neq F_3$ with $F_3\setminus \{x\} \subset G$. Then $G\cap F_1\neq \emptyset$ and $G\cap F_2\neq \emptyset$ imply $|G|\geq k+1$, a contradiction.
\end{proof}


\begin{fact}\label{fact-3.3}
If $\delta_{k-1}^+(\hf)\geq r$ then $\delta_{k-2}^+(\hf(x))\geq r$ for any $x\in [n]$.
\end{fact}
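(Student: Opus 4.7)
The plan is to simply unwind the definitions and use the bijective correspondence between members of $\hf$ containing $x$ and members of $\hf(x)$. Given an arbitrary $E \in \partial \hf(x)$, I want to exhibit at least $r$ distinct members of $\hf(x)$ containing $E$.

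First I would observe that if $E \in \partial \hf(x)$ with $|E| = k-2$, then by definition there is some $F' \in \hf(x)$ with $E \subset F'$, which means $F' = F \setminus \{x\}$ for some $F \in \hf$ with $x \in F$. In particular, $x \notin E$ and $E \cup \{x\}$ is a $(k-1)$-subset of $F \in \hf$, so $E \cup \{x\} \in \partial \hf$.

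Next, applying the hypothesis $\delta_{k-1}^+(\hf) \geq r$ to the set $E \cup \{x\}$, I obtain $r$ distinct sets $F_1, \ldots, F_r \in \hf$ each containing $E \cup \{x\}$. Since $x \in F_i$, the sets $F_i \setminus \{x\}$ belong to $\hf(x)$, they all contain $E$, and they are pairwise distinct (as the $F_i$ are distinct and each contains $x$). Therefore $|\hf(x)(E)| \geq r$, proving $\delta_{k-2}^+(\hf(x)) \geq r$.

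There is no real obstacle here; the statement is essentially a tautology once one translates the shadow-degree condition through the link operation $F \mapsto F \setminus \{x\}$. The only mild subtlety is checking that $E \cup \{x\}$ genuinely lies in $\partial \hf$ (which needs $x \notin E$, automatic from $E \subset F \setminus \{x\}$) and that the correspondence $F \leftrightarrow F \setminus \{x\}$ preserves distinctness.
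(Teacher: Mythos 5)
Your proof is correct and follows essentially the same route as the paper's: both lift a $(k-2)$-set $E\in\partial\hf(x)$ to the $(k-1)$-set $E\cup\{x\}\in\partial\hf$, invoke $\delta_{k-1}^+(\hf)\geq r$, and observe that the $r$ witnesses all contain $x$ and hence descend to $r$ distinct members of $\hf(x)$ through $E$. The paper phrases this with $E\in\hf(x)$ and $R\in\binom{E}{k-2}$, but this is the same argument up to notation.
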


\begin{proof}
For any $E\in \hf(x)$ and $R\in \binom{E}{k-2}$, $R\cup \{x\}$ is covered by at least $r$ sets in $\hf$. Since all these sets contain $x$, it follows that $\delta_{k-2}^+(\hf(x))\geq r$.
\end{proof}

For $Y\in \binom{[n]}{2r-1}$, define
\[
\hl(n,k,Y) =\left\{F\in \binom{[n]}{k}\colon |F\cap Y|\geq r\right\}.
\]
For $\hg\subset \binom{[n]}{2}$, we use $\delta(\hg)$ to denote the minimum degree of $\hg$.

\begin{proof}[Proof of Theorem \ref{thm-main2}]
Let $\hht\subset \binom{[n]}{3}$ be an intersecting family with $\delta_2^{+}(\hht)\geq 2$. Let $X\subset [n]$ be a minimal support and let $y\in [n]\setminus X$. By Fact \ref{fact-3.3} we infer that $\hht(y)$ is a graph with minimum degree at least 2.

Since $X$ is a minimal support, for any $T,T'\in \hht$,
\begin{align}\label{ineq-0}
T\cap T'=\{z\} \mbox{ implies } z\in X.
\end{align}
Since $y\notin X$, by \eqref{ineq-0} we infer that $\hht(y)$ is intersecting. By Proposition \ref{prop-fnkk2} we have either $\hht(y)=\emptyset$ or $\hht(y)=\binom{Y}{2}$ for some $Y$ with $|Y|=3$.

\begin{claim}\label{claim-1}
Let $z\in [n]$.
If $\hht(z)=\binom{Y}{2}$ for some $Y$ with $|Y|=3$, then $\hht\subset \hl(n,3,Y)$.
\end{claim}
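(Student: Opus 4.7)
The plan is a direct case analysis on whether the set $T\in \hht$ under consideration contains $z$. Write $Y=\{a,b,c\}$, so the hypothesis $\hht(z)=\binom{Y}{2}$ unpacks to the statement that the three triples
\[
\{z,a,b\},\ \{z,a,c\},\ \{z,b,c\}
\]
all lie in $\hht$. The goal is to show $|T\cap Y|\geq 2$ for every $T\in\hht$.

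In the first case, suppose $z\in T$. Since $|T|=3$, the set $T\setminus\{z\}$ has size $2$ and belongs to $\hht(z)=\binom{Y}{2}$, so $T\setminus\{z\}\subset Y$ and hence $|T\cap Y|\geq 2$ immediately.

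In the second case, suppose $z\notin T$. Because $\hht$ is intersecting, $T$ meets each of $\{z,a,b\}$, $\{z,a,c\}$, $\{z,b,c\}$; since $z\notin T$, this forces $T$ to meet each of the three pairs $\{a,b\}$, $\{a,c\}$, $\{b,c\}$. However, a subset of $Y$ of size at most $1$ misses at least one of these three pairs (the pair avoiding that single element), so we must have $|T\cap Y|\geq 2$, completing the proof.

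There is no real obstacle here; the claim is essentially just the observation that the three triples $\{z\}\cup e$ for $e\in\binom{Y}{2}$ jointly force every member of $\hht$ to either contain $z$ together with a pair in $Y$, or to hit all three pairs of $Y$ from within $Y$ itself. In particular, neither Proposition \ref{prop-1} nor the minimality of $X$ is needed for this step.
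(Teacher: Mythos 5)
Your proof is correct and essentially identical to the paper's: the paper also splits on whether $z\in T$, uses the definition of $\hht(z)$ in the first case, and uses the intersecting property against the three triples $\{z\}\cup P$, $P\in\binom{Y}{2}$, in the second. Your closing remark that neither Proposition \ref{prop-1} nor the minimality of $X$ is needed is accurate as well.
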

\begin{proof}
Choose $T\in \hht$. If $z\in T$ then $|T\cap Y|=2$ by definition. If $z\notin T$ then by $\nu(\hht)=1$, $T\cap P\neq \emptyset$ for each pair $P\in \binom{Y}{2}$. Hence $|T\cap Y|\geq 2$.
\end{proof}

Claim \ref{claim-1} shows that $|\hht| \leq |\hl(n,3,2)|$ unless $\hht(y)=\emptyset$ for all $y\in [n]\setminus X$.  Thus we may assume that $X=[n]$. Since $X$ is a support of minimal size, for each $x\in [n]$ there exist $T_x,T_x'\in \hht$ with $T_x\cap T_x'=\{x\}$. In particular $\nu(\hht(x))\geq 2$. By proposition \ref{prop-1} equality holds.

%
%


\begin{claim}\label{claim-2}
$\partial \hht=\binom{[n]}{2}$ and  the vertex set of $\hht(x)$ is $[n]\setminus \{x\}$ for any $x\in [n]$.
\end{claim}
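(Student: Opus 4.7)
My plan is to deduce both statements at once by showing that every pair $\{x,y\}$ with $x\neq y$ belongs to $\partial\hht$; both conclusions of the claim are then immediate. I will argue by contradiction: assuming some pair $\{x,y\}$ avoids $\partial\hht$, I will use the minimality of $X=[n]$ to produce critical triple pairs at $x$ and $y$, then exploit Fact \ref{fact-3.3} to force extra triples until two disjoint members of $\hht$ appear.

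By the minimality of $X$, pick $T_x,T_x'\in\hht$ with $T_x\cap T_x'=\{x\}$ and $T_y,T_y'\in\hht$ with $T_y\cap T_y'=\{y\}$; write $T_x=\{x,a,b\}$ and $T_x'=\{x,c,d\}$. Since $\{x,y\}\notin\partial\hht$, we have $x\notin T_y\cup T_y'$, so the intersecting property forces $T_y$ and $T_y'$ each to meet both $\{a,b\}$ and $\{c,d\}$. Combined with $T_y\cap T_y'=\{y\}$, a short case check gives, up to relabelling, $T_y=\{y,a,c\}$ and $T_y'=\{y,b,d\}$; the six elements $x,y,a,b,c,d$ are then pairwise distinct.

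Next, I apply Fact \ref{fact-3.3}, which gives $\delta_1^+(\hht(z))\geq 2$ for every $z$. In the graph $\hht(a)$ the vertex $x$ has positive degree (via $T_x$), hence some $u\neq b$ satisfies $\{a,x,u\}\in\hht$. Intersecting $\{a,x,u\}$ with $T_y'=\{y,b,d\}$ gives $u\in\{y,b,d\}$; but $u=y$ would put $\{x,y\}$ into $\partial\hht$, and $u\neq b$, so $u=d$ and $\{a,x,d\}\in\hht$. The same reasoning applied to $x$ in $\hht(b)$ (using $T_y=\{y,a,c\}$ as the obstruction, which uniquely forces the extra neighbour to be $c$) yields $\{b,x,c\}\in\hht$; and applied to $y$ in $\hht(a)$ (using $T_x'$) yields $\{a,y,d\}\in\hht$.

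Finally, $\{a,y,d\}$ and $\{b,x,c\}$ both lie in $\hht$, yet they are disjoint since $x,y,a,b,c,d$ are six distinct elements. This contradicts the intersecting property of $\hht$, so $\{x,y\}\in\partial\hht$ for every distinct $x,y\in[n]$, which gives $\partial\hht=\binom{[n]}{2}$ and shows that the vertex set of $\hht(x)$ is $[n]\setminus\{x\}$ for every $x$. The only delicate step is the case check that pins down $T_y$ and $T_y'$; once it is done, each extra neighbour demanded by Fact \ref{fact-3.3} has a unique admissible value, so the forcing is essentially automatic.
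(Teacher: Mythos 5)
Your argument is correct, and it takes a genuinely different route from the paper's. The paper argues as follows: fix a size-$2$ matching $\{u_1,u_2\},\{v_1,v_2\}$ in $\hht(x)$, observe that $\hht(x)$ and $\hht(y)$ are cross-intersecting (because no edge of either link graph contains the other apex, since $\{x,y\}\notin\partial\hht$), deduce $\hht(y)\subset\{\{u_i,v_j\}\}$, upgrade to equality using the minimum-degree condition, bounce the cross-intersection back to collapse $\hht(x)$ to exactly the two matching edges, and get a contradiction with minimum degree $2$ in $\hht(x)$. You instead work entirely with explicit triples: you pin down the configuration $T_x=\{x,a,b\}$, $T_x'=\{x,c,d\}$, $T_y=\{y,a,c\}$, $T_y'=\{y,b,d\}$ on six distinct points, then invoke Fact~\ref{fact-3.3} in the links $\hht(a)$ and $\hht(b)$ to force the new triples $\{b,x,c\}$ and $\{a,y,d\}$, which are disjoint. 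The paper's route is a bit more conceptual (cross-intersecting graphs, $K_{2,2}$ structure); yours is more concrete and avoids the cross-intersection formalism, at the cost of a small case analysis and some redundant bookkeeping (your derived triple $\{a,x,d\}$ is never used). Both arguments rely on the same two inputs: the critical pairs at $x$ and $y$ coming from minimality of the support, and the positive-degree-$\geq 2$ condition on the links from Fact~\ref{fact-3.3}. One minor presentational point: it would be cleaner to drop the unused step and go straight to the two triples that produce the disjoint pair.
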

\begin{proof}
 Suppose that there exist $x,y\in [n]$ with $x\neq y$ such that $\{x,y\}\notin \partial \hht$. Let $\{u_1,u_2\},\{v_1,v_2\}\in \hht(x)$ be a matching. By $\nu(\hht)=1$, $\hht(x),\hht(y)$ are cross-intersecting.  It follows that $\hht(y)\subset \{\{u_i,v_j\}\colon i=1,2, j=1,2\}$. Since $\delta(\hht(x))\geq 2$, $\hht(y)= \{\{u_i,v_j\}\colon i=1,2, j=1,2\}$. Then by the cross-intersecting property we infer that $\hht(x)=\{\{u_1,u_2\},\{v_1,v_2\}\}$, contradicting $\delta(\hht(y))\geq 2$. Thus $\partial \hht=\binom{[n]}{2}$. It follows that the vertex set of $\hht(x)$ is $[n]\setminus \{x\}$.
\end{proof}

For $n=6$ the Erd\H{o}s-Ko-Rado theorem implies $|\hht|\leq \binom{6-1}{3-1}=10= 3n-8$. If $|\hht|=10$ then $|\hht|\binom{3}{2}=2\binom{6}{2}$. Hence every 2-subset of $[6]$ is contained in exactly two edges of $\hht$. There is a unique 2-design with these properties (cf. e.g. \cite{Furedi83}). Summarizing, for $n=6$ there are three distinct triple-systems achieving equality. Namely, $\binom{[5]}{3}$, $\hl(6,3,2)$ and the above 2-design.

From now on assume $n\geq 7$.

 \begin{claim}\label{claim-3}
For any $x\in [n]$, $\hht(x)$ contains no $C_{2\ell+1}$ for all $\ell\geq 2$.
\end{claim}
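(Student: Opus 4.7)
I will argue by contradiction. Suppose $\hht(x)$ contains a cycle $C_{2\ell+1}$ with vertices $u_1,u_2,\ldots,u_{2\ell+1}$ in cyclic order, so that the triples $T_i:=\{x,u_i,u_{i+1}\}$ (indices mod $2\ell+1$) all belong to $\hht$. By Claim \ref{claim-2} every pair in $\binom{[n]}{2}$ lies in $\partial\hht$, and hence, by the standing assumption $\delta_2^{+}(\hht)\geq 2$, is contained in at least two triples of $\hht$. It therefore suffices to exhibit a single pair $P$ that extends to at most one member of $\hht$.

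The key observation is that any triple $\{a,b,w\}\in\hht$ must meet every $T_i$; whenever $T_i\cap\{a,b\}=\emptyset$ this forces $w\in T_i$. Three consecutive triples $T_j,T_{j+1},T_{j+2}$ intersect in $\{x\}$ alone, since $T_j\cap T_{j+1}=\{x,u_{j+1}\}$ while $u_{j+1}\notin T_{j+2}$. Consequently, whenever $\{a,b\}$ is disjoint from three consecutive $T_i$'s, we must have $w=x$, and so $\{a,b\}$ extends to at most one triple of $\hht$, contradicting $\delta_2^{+}(\hht)\geq 2$.

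For $\ell\geq 3$ I take $P=\{u_1,u_3\}$: neither $u_1$ nor $u_3$ lies in $T_i$ for any $i\in\{4,5,\ldots,2\ell\}$, a range containing the consecutive indices $4,5,6$, so the observation immediately applies. For $\ell=2$ that range reduces to the single index $\{4\}$ and the argument collapses when using only cycle vertices, so I exploit $n\geq 7$ to pick $v\in[n]\setminus(\{x\}\cup\{u_1,\ldots,u_5\})$ and take $P=\{v,u_1\}$. Then $P$ is disjoint from each of $T_2,T_3,T_4$, and the same three-consecutive-triples collapse forces the only possible extension to be $\{x,v,u_1\}$, completing the contradiction.

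The main obstacle is precisely the $\ell=2$ subcase: the ``three consecutive triples'' trick cannot be carried out using only vertices of $C_5$, and this is exactly where the hypothesis $n\geq 7$ is invoked, supplying an auxiliary vertex $v$ outside the cycle to serve as one endpoint of the distinguished pair $P$.
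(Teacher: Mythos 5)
Your proof is correct, but it takes a genuinely different route from the paper's. The paper first invokes $\nu(\hht(x))=2$ (which was established just before Claim~\ref{claim-2} via Proposition~\ref{prop-1}) to kill all cycles $C_\ell$ with $\ell\geq 6$; for the remaining case $C_5$ it observes that then $\hht(x)$ lives on exactly five vertices, picks a seventh vertex, and uses $\tau(C_5)=3$ to produce two disjoint members of $\hht$, contradicting $\nu(\hht)=1$. You bypass the matching-number condition altogether and work directly with the shadow-degree hypothesis: the three consecutive triples $T_j,T_{j+1},T_{j+2}$ through $x$ meet only in $\{x\}$, so any pair disjoint from all three has $x$ as its unique possible third vertex, and since Claim~\ref{claim-2} gives $\partial\hht=\binom{[n]}{2}$, this contradicts $\delta_2^+(\hht)\geq 2$. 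Your argument is more uniform in $\ell$, and it uses fewer of the already-established facts (it never needs $\nu(\hht(x))=2$), at the small cost of a case split between $\ell\geq 3$ (pair inside the cycle, $\{u_1,u_3\}$) and $\ell=2$ (pair $\{v,u_1\}$ with $v$ an auxiliary vertex, which is where $n\geq 7$ enters). Both proofs need $n\geq 7$ only to dispatch $C_5$, and both ultimately rest on Claim~\ref{claim-2}.
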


\begin{proof}
Since $\nu(\hht(x))=2$, $\hht(x)$ contains no $C_\ell$, $\ell\geq 6$.   Also if it contains $C_5$ then $\hht(x)$ has exactly 5 vertices. Say $x=1$, $C_5$ is on $[2,6]$. By $n\geq 7$  we consider $\hht(7)$. Now $\tau(C_5)=3$ implies the existence of $(w,z)\in C_5$ with $\{u,v,7\}\cap \{1,w,z\}=\emptyset$, contradicting $\nu(\hht)=1$.
\end{proof}

\begin{claim}\label{claim-4}
For any $x\in [n]$,  $\hht(x)$ is a $K_{2,n-3}$.
\end{claim}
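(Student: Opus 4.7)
My plan for proving Claim \ref{claim-4} is first to show that $\hht(x)$ is triangle-free, which together with Claim \ref{claim-3} will force $\hht(x)$ to be bipartite, and then to apply K\"onig's theorem together with the minimum-degree condition $\delta(\hht(x))\geq 2$ to identify $\hht(x)$ as $K_{2,n-3}$.

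The main step is ruling out triangles in $\hht(x)$. Suppose for contradiction that $\{a,b\},\{b,c\},\{a,c\}\in \hht(x)$. Since $n\geq 7$, I can pick a vertex $y\in [n]\setminus\{x,a,b,c\}$ and, crucially, a further vertex $d\in [n]\setminus\{x,y,a,b,c\}$. For any edge $\{p,q\}\in \hht(y)$ with $x\notin\{p,q\}$, the triples $\{y,p,q\}$ and $\{x,a,b\},\{x,b,c\},\{x,a,c\}$ all lie in $\hht$ and must pairwise intersect; since $x\neq y$, $x\notin\{p,q\}$ and $y\notin\{a,b,c\}$, this forces $\{p,q\}$ to meet each of the three triangle edges $\{a,b\},\{b,c\},\{a,c\}$. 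A quick case analysis shows that $p,q\in\{a,b,c\}$, so $\{p,q\}$ itself has to be a triangle edge. Consequently every edge of $\hht(y)$ is either $\{a,b\},\{b,c\},\{a,c\}$ or contains $x$; but then $\deg_{\hht(y)}(d)\leq 1$ since only $\{x,d\}$ is a candidate, contradicting $\delta(\hht(y))\geq 2$.

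Once triangle-freeness is established, Claim \ref{claim-3} rules out all odd cycles, so $\hht(x)$ is bipartite. K\"onig's theorem gives $\tau(\hht(x))=\nu(\hht(x))=2$; let $\{a,b\}$ be a vertex cover of size two. Because $\delta(\hht(x))\geq 2$ and every edge is incident to $\{a,b\}$, each $v\in [n]\setminus\{x,a,b\}$ must be adjacent to both $a$ and $b$, giving $K_{2,n-3}\subseteq \hht(x)$. The only remaining candidate edge is $\{a,b\}$, but adjoining it would produce a triangle with any such $v$, contradicting triangle-freeness. Therefore $\hht(x)=K_{2,n-3}$.

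The principal obstacle is the triangle-freeness step: I need to exploit the cross-intersecting relation between $\hht(x)$ and $\hht(y)$ strongly enough that $\hht(y)$ becomes tiny, and this is precisely where the hypothesis $n\geq 7$ is used, namely to produce the fourth vertex $d$ whose degree obstruction yields the contradiction.
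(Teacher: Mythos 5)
Your proposal is correct and follows essentially the same route as the paper: prove triangle-freeness, combine with Claim~\ref{claim-3} to conclude bipartiteness, then apply K\H{o}nig's theorem together with $\delta(\hht(x))\geq 2$ to pin down $K_{2,n-3}$. The only difference is cosmetic and lies in the triangle-freeness step: the paper picks a pair $\{u,v\}$ outside $\{x,y_1,y_2,y_3\}$, uses $\delta_2^+(\hht)\geq 2$ to find a triple $\{u,v,w\}\in\hht$ with $w\neq x$, and derives a disjointness contradiction directly, whereas you pick $y$ and $d$ outside the triangle, show the edges of $\hht(y)$ avoiding $x$ are confined to the triangle, and then contradict $\deg_{\hht(y)}(d)\geq 2$; both hinge on the same degree condition of the link $\hht(y)$ (Fact~\ref{fact-3.3} plus Claim~\ref{claim-2}), and both are equally valid.
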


\begin{proof}
First let us show that $\hht(x)$ is bipartite.  By Claim \ref{claim-3} we are left to show that $\hht(x)$ is triangle-free. Indeed, otherwise let $y_1y_2y_3$ be a triangle in $\hht(x)$. By Claim \ref{claim-2}, $\{u,v\}\in \partial \hht$ for any $\{u,v\}\in [n]\setminus \{y_1,y_2,y_3,x\}$. By $\delta_2^+(\hht)\geq 2$, we can choose $w\in [n]\setminus \{x\}$ such that $\{u,v,w\}\in \hht$. Then one of $\{x,y_1,y_2\}$, $\{x,y_2,y_3\}$, $\{x,y_1,y_3\}$ is disjoint to $\{u,v,w\}$, contradicting $\nu(\hht)=1$.
  Thus $\hht(x)$ is bipartite.


By K\"{o}nig-Hall Theorem all the edges of $\hht(x)$ are covered by two vertices, say $\{y,z\}$. Should they be in different partite sets, we get a contradiction with $\delta(\hht(x))\geq 2$. Thus $\{y,z\}$ is one of the partite classes. Using $\delta(\hht(x))\geq 2$ again, the bipartite graph $\hht(x)$ must be $K_{2,n-3}$.
\end{proof}

Let $\hht(x)=\{\{u_i,v_j\}\colon i=1,2, j=1,2,\ldots,n-3\}$. Then $\{x,u_1\},\{x,u_2\}\in \hht(v_1)$. Since $\delta_2^+(\hht)\geq 2$, there is some $\{v_1,u_1,z\}\in \hht$ such that $z\neq x$. Since $\{v_1,u_1,z\} \cap\{x,u_2,v_j\}\neq \emptyset$ for all $j=2,3,\ldots,n-3$, we must have $z=u_2$. Thus $\{\{x,u_1\},\{x,u_2\},\{u_1,u_2\}\}\subset \hht(v_1)$, contradicting Claim \ref{claim-4}. Thus the theorem is proven.
%
\end{proof}

\vspace{6pt}\noindent
{\bf Acknowledgement:}  The first author's research was partially supported by the National Research, Development and Innovation Office NKFIH, grant K132696.

\end{document}